\numberwithin{equation}{section}
\newtheorem{theorem}{Theorem}[section]
\newtheorem{lemma}{Lemma}[section]
\newtheorem{remark}{Remark}[section]
\newtheorem{definition}{Defintion}[section]
\newcommand{\RR}{{\mathbb R}}
\newcommand{\QQ}{\mathcal{Q}}
\newcommand{\DD}{\mathcal{D}}
\newcommand{\p}{\prime}
\newcommand{\pt}{\partial_t}
\newcommand{\pr}{\partial_r}
\newcommand{\ppt}{\partial^2_t}
\newcommand{\ppr}{\partial^2_r}
\newcommand{\pR}{\partial_R}
\newcommand{\ppR}{\partial^2_R}
\newcommand{\pa}{\partial_a}
\newcommand{\ppa}{\partial^2_a}
\newcommand{\bbeta}{{\tilde{\beta}}}
\newcommand{\mtr}{g_{\scriptscriptstyle \Omega \Omega}(r)}
\newcommand{\mt}{g_{\scriptscriptstyle \Omega \Omega}}
\newcommand{\dmt}{\dot{g}_{\scriptscriptstyle \Omega \Omega}}
\def\skwdrv{(\partial_{\tau}-2\beta(\tau) \xi \partial_{\xi})}
\begin{document}

\title{Blow up for critical wave equations on curved backgrounds}

\author{J. Nahas}
\address{\'Ecole Polytechnique F\'ed\'erale de Lausanne \\
MA B1 487\\
CH-1015 Lausanne}
\email{joules.nahas@epfl.ch}

\author{S. Shahshahani}
\address{\'Ecole Polytechnique F\'ed\'erale de Lausanne \\
MA B1 477\\
CH-1015 Lausanne}
\email{sohrab.shahshahani@epfl.ch}

\maketitle

\begin{abstract}
We extend the slow blow up solutions of Krieger, Schlag, and Tataru to semilinear wave equations on a curved background. In particular, for a class of manifolds $(M,g)$
we show the existence of a family of blow-up solutions with finite energy norm to the equation 
\begin{equation}
\partial_t^2 u - \Delta_g u = |u|^4 u,
\notag
\end{equation}
with a continuous rate of blow up. In contrast to the case where $g$ is the Minkowski metric, the argument used to produce these solutions can only obtain blow up rates that are bounded above.
\end{abstract}


\section{Introduction}
We study the nonlinear focusing wave equation on a curved, three dimensional background manifold $(M, g)$,
\begin{equation}
\partial_t^2 u - \Delta_g u = |u|^4 u,
\label{slv-m}
\end{equation}
where $u: M \times \mathbb R \rightarrow \mathbb R$. Much is known about well posedness and blow up of solutions to this equation when the metric $g$ is flat. Some exciting work--for example \cite{Fon}, \cite{Met-Tay}, \cite{Ank-Pier-Vall}, and references therein, has been done for hyperbolic backgrounds, but to our knowledge little is known about it in the case of a more general metric. We construct a continuum of blow up solutions for this equation such that the energy norm,
\begin{equation}
\|u\|_{\dot{H^1}}^2 \equiv \int_{M} (|\partial_t u|^2 + \langle \nabla u, \nabla u\rangle_{g})\,dvol,
\notag 
\end{equation}
is finite,
like the work of Krieger, Schlag, and Tataru in \cite{Krg-Sch-Ttr}. The scenario of solutions that blow up with bounded energy is referred to as
type {II} blow up. It is widely believed that type {II} blow up solutions are intimately connected to the time independent, finite energy solutions of an equation, the soliton solutions. Since blow up is a local phenomenon, and $M$ is approximately flat near the point $r=0$, we also base our construction on perturbations of the rescaled soliton of the equation from the Minkowski space, rather than from $M$. This soliton can be thought of as a function on the tangent space to $M$ at $r=0$, identified with a neighborhood of the point via the chosen coordinates. We will denote it by
\begin{equation}
W(|x|) = \frac{1}{\sqrt{ 1+\frac{|x|^2}{3} }}.
\notag 
\end{equation}
That the relevant time independent solution has the flat space as background is similar to the bubbling off of harmonic maps from $\mathbb R^2$, in the study of the harmonic map heat flow on compact Riemann surfaces by Struwe in \cite{Str-hrm-ht}. Using finite speed of propagation for the wave equation, we are able to modify this 'local' flat-background solution, to a solution on $M \times \mathbb R$.

Our main result is the following theorem:
\begin{theorem}
\label{main}
Let $(M,g)$ be a smooth three dimensional manifold, and $(r(p), \theta (p),\phi(p))$ be a coordinate chart in some open set $U \subseteq M$ such that the metric $g$ satisfies
\begin{align}
g(r,\theta,\phi) = \,dr^2 + \mtr(\,d\theta^2 + \sin^2 \theta \,d\phi^2),
\notag
\end{align}
where $\mtr$ is analytic in $U$, and obeys the estimate
\begin{align}
\label{mtr-cnd}
\left |\frac{\dot{g}_{\scriptscriptstyle \Omega \Omega}(r)}{\mtr}-\frac{2}{r} \right | \lesssim r.
\end{align}
Then for $\nu \in (1/2, 1]$,
 and arbitrary $\delta > 0$,
there exists a solution $u \in H^{1+\nu}(M)$ to \eqref{slv-m} such that for small enough $t$, $u \in C_0(U)$, and 
\begin{align}
u(r) = t^{-1-\nu} W(t^{-1-\nu} r) + \varepsilon(r,t),
\label{slt-pls-rdt} \\
\textrm{ with }
\int_{r \leq t}(\frac{1}{2}|\nabla \varepsilon|^2 + \frac{1}{2}|\partial_t \varepsilon|^2 + \frac{1}{6}|\varepsilon|^6)\sqrt{\det g}\,dr \,d\theta \,d\phi \rightarrow 0,
\notag \\
\textrm{ and }
\int_{r > t}(\frac{1}{2}|\nabla u|^2 + \frac{1}{2}|\partial_t u|^2 + \frac{1}{6}|u|^6) \sqrt{\det g}\,dr \,d\theta \,d\phi \leq \delta.
\notag
\end{align}
\end{theorem}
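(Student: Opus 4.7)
\bigskip

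\textbf{Proof plan.} The overall strategy is to mimic the Krieger--Schlag--Tataru scheme on $\RR^{1+3}$, but with the two modifications forced by the curved background: (i) throughout the iteration we have to carry along the extra first order term that distinguishes $\Delta_g$ from the flat radial Laplacian, and (ii) the final solution will be produced on a neighbourhood of $r=0$ and then extended to all of $M$ by finite speed of propagation. Writing the radial part of the Laplacian as
\begin{equation}
\Delta_g u = \ppr u + \frac{\dmt(r)}{\mt(r)}\pr u = \ppr u + \frac{2}{r}\pr u + \mathcal P(r)\,\pr u,\qquad |\mathcal P(r)|\lesssim r,
\notag
\end{equation}
the equation \eqref{slv-m} inside $U$ reads $\mlaplace u = |u|^4 u + \mathcal P(r)\pr u$, i.e.\ the Minkowski problem with an extra source of order $r$ times a first derivative.

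The first stage is the construction of an approximate solution. Starting from the rescaled ground state $u_0(r,t)=\lambda^{1/2}W(\lambda r)$ with $\lambda(t)=t^{-1-\nu}$, I would run the KST iteration in the self-similar variable $a=\lambda r$ in the inner region $r\le t$, producing successive corrections $v_k$ chosen to cancel the leading parts of the error $e_{k-1}:=\mlaplace u_{k-1} - |u_{k-1}|^4 u_{k-1} - \mathcal P(r)\pr u_{k-1}$. The key new observation is that in self-similar coordinates the curvature source contributes $\mathcal P(a/\lambda)\cdot \lambda\,\partial_a u_{k-1}$, so $\mathcal P(r)=O(r)$ translates into an extra factor $a/\lambda = r = O(t^{1+\nu})$, which is small as $t\to 0$. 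This smallness is what allows the curvature term to be absorbed at each step of the iteration without disturbing the structural properties of the KST corrections; moreover, since $\mathcal P$ is analytic in $r$, the corrections retain their form as finite combinations of inverse odd powers of $1+a^2/3$ times polynomials in $a$ and logs, with the analogous matching in the exterior hyperbolic region $r\in[t/2,t]$.

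Once $u_k$ is built so that the residual error $e_k$ has sufficiently small norm in the appropriate weighted Sobolev spaces on the cone, I would pass to the genuine solution by setting $u=u_k+\varepsilon$ and solving the linearised perturbation equation
\begin{equation}
\ppt\varepsilon - \Delta_g\varepsilon - 5u_k^4\varepsilon = -e_k + N(u_k,\varepsilon),
\notag
\end{equation}
with zero Cauchy data at $t=0$, where $N$ collects the nonlinear terms. For this I would rerun the transference/parametrix argument of KST for the conjugated operator on the cone, using that the added potential $\mathcal P(r)\pr$ is a lower order perturbation with the same factor $O(t^{1+\nu})$ of smallness in self-similar norms, and apply a contraction/fixed point on a space of functions enjoying the KST weighted energy bounds. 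Finally, because $\varepsilon$ is constructed locally in $r\le t$, I would extend $u_k+\varepsilon$ off the cone by a smooth, compactly supported cut-off inside $U$ and evolve the equation on $M$; finite speed of propagation guarantees that the evolution agrees with the constructed profile in the cone, and the outer bound by $\delta$ in \eqref{slt-pls-rdt} is obtained by taking the cut-off data sufficiently small in $\dot H^1\cap L^6$.

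The main obstacle is the control of the curvature term $\mathcal P(r)\pr$ in the iterative construction: its smallness is quantitative rather than structural, so every correction generated by the KST scheme must be rechecked for compatibility with this new source, and the upper endpoint $\nu\le 1$ in the theorem is forced by the balance between the $O(t^{1+\nu})$ gain from $\mathcal P$ and the loss of $\lambda$-powers incurred when inverting the self-similar operator; beyond this range the curvature correction overwhelms the gain at some stage of the iteration, which is precisely the origin of the restriction announced in the abstract.
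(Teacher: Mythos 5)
Your plan follows the paper's proof in all essentials: the same two-stage Krieger--Schlag--Tataru scheme (an iterative construction of an approximate solution inside the light cone that carries the extra first-order term $\kappa(r)r\pr$ through every correction, followed by a contraction argument for $\varepsilon$ in weighted spaces), and the same extension off the cone by cut-off and finite speed of propagation, with the exterior energy controlled by $\delta$ via small-energy global well-posedness.

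The one substantive discrepancy is where, and why, the restriction $\nu\le 1$ enters. You place it in the iteration, arguing that the curvature source carries a gain $r=O(t^{1+\nu})$ that must beat losses of powers of $\lambda$; but on the cone $r$ ranges up to $t$ (your identity $r=O(t^{1+\nu})$ only holds for bounded $R=\lambda r$), so the honest gain from $\kappa(r)r\pr=\kappa(r)r^2\cdot\frac{\pr}{r}$ is only $r^2\le t^2$, which the paper records by adjoining the extra variable $b_2=r^2$ to the symbol classes --- and with that bookkeeping the renormalization step closes for all $\nu>1/2$ with no upper bound. The actual obstruction sits in the perturbation step: after conjugating by the distorted Fourier transform $\mathcal{F}$ of $\mathcal{L}=-\ppR-5W^4$, the curvature term $\lambda^{-2}\mathcal{F}RK(R^{-1}\tilde{\varepsilon})$ costs one derivative (paid for by the $\alpha\mapsto\alpha+1/2$ smoothing of Lemma \ref{smt-ffc}) but only supplies a time factor $\tau^{-1-1/\nu}$, and this covers the two powers of $\tau$ that the parametrix $\mathcal{H}$ requires (passing from $L^{\infty,N}$ to $L^{\infty,N-2}$) exactly when $1+1/\nu\ge 2$, i.e.\ $\nu\le 1$; this is the content of Lemma \ref{crv-lps}. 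So the endpoint is a feature of the mapping properties of $K$ in the weighted spectral spaces, not of the formal power counting in the inner iteration; if you execute your plan as written you will find the iteration closes for every $\nu$ and the failure only surfaces when you try to close the fixed point.
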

These solutions are of considerable interest, because it is conjectured that for the set of all initial data that lead to blow up for the nonlinear wave equation, type {II} solutions are the boundary of this set (see Remark 1.2 of \cite{Dck-Kng-Mrl}). This remains a challenging problem to investigate.
\begin{remark}
Nontrivial examples of a pair $(M,g)$ that satisfies \eqref{mtr-cnd} include the three sphere and three hyperbolic space.
\end{remark}
\begin{remark} Let $R_{rrrr}(0)$ denote the diagonal radial component of the curvature tensor as $r \rightarrow 0$. It is a straightforward exercise to show that for small enough $r$, the difference between $g$ and the flat metric, $ r^2 - \mtr $, satisfies
\begin{equation}
r^2 - \mtr   = \frac{1}{3} R_{rrrr}(0) r^4 + o(r^4). 
\notag 
\end{equation}
\label{scl-brk}
\end{remark}
\begin{remark}  In contrast to \cite{Krg-Sch-Ttr}, where there was no upper bound on the rate of the blow up, our methods only produce blow up for $\nu  \leq 1$. Whether these rates can be extended above this threshold is an interesting open question.
\end{remark}
We expect the corresponding local well posedness, global well posedness, and blow up results developed for the flat case to hold for this equation, which we briefly recall. Global well posedness for initial data in the energy space up to the energy of the first soliton, $W(|x|)$, was proven in \cite{Kng-Mrl} using the concentration compactness techniques developed in \cite{Bhr-Grr}. This result is optimal, in the sense that for any energy above this threshold, there exist blow up solutions, see \cite{Krg-Sch-Ttr} (and also \cite{Lvn}).

Duyckaerts and Merle obtained a type {II} blow up classification in \cite{Dck-Mrl} for the semilinear wave equation. They showed that up to the symmetries of the equation, for solutions with free energy equal to that of $W(|x|)$, there is only one solution other than $W(|x|)$ that does not scatter. This solution, $W^{-}(x,t)$, scatters backwards in time, and converges exponentially to $W(|x|)$ forwards in time. This was followed by a result of Duyckaerts, Kenig, and Merle in \cite{Dck-Kng-Mrl} and \cite{Dck-Kng-Mrl-rxv} that for the nonlinear wave equation, type {II} blow up solutions must have a profile that is a rescaled soliton plus radiation, assuming the free energy of the initial data is close enough to that of $W(|x|)$. This fundamental theorem for type {II} blow up solutions allows one to focus on determining the rescaling parameter $\lambda(t)$ and the radiation term.

This construction of slow blow up has been adapted to a number of different situations. It was originally produced for the charge one equivariant wave maps from $\mathbb R^{2+1}$ to a sphere by Krieger, Schlag, and Tataru in \cite{Krg-Sch-Ttr-wv-mps} and wave maps to a surface of rotation by C\^{a}rstea in \cite{crs}, then for the focusing nonlinear wave equation in \cite{Krg-Sch-Ttr}, and for the Yang-Mills equation in \cite{Krg-Sch-Ttr-yng-mll}. The construction has also been modified to produce blow up at infinity for the nonlinear wave equation in three dimensions by Donninger and Krieger in \cite{Dnn-Krg}, and the range the blow up exponent $\nu$ was extended by Krieger and Schlag in \cite{Krg-Sch}. We also note that in an upcoming paper \cite{Shr}, the second author has further extended this machinery to the case of a wave map between two dimensional spheres, with a similar upper bound on $\nu$.

While the solutions produced by this machinery are not smooth, Rodnianski and Sterbenz in \cite{Rdn-Str} prove smooth type {II} blow up for $k$-equivariant wave maps to a sphere, where $k \ge 4$. Moreover, these solutions are stable within the equivariant class of data. This was extended to all $k \ge 1$, as well as equivariant solutions to the Yang-Mills equation by Rodnianski and Rapha\"{e}l in \cite{Rdn-Rph}. The result corresponding to the nonlinear wave equation was proven by Hillairet and Rapha\"{e}l in \cite{Hll-Rph}.

From our ansatz \eqref{slt-pls-rdt}, we treat the laplacian term in \eqref{slv-m} as a flat laplacian plus a perturbation.
Just as in \cite{Krg-Sch-Ttr}, we cannot solve \eqref{slv-m} directly for $\varepsilon$, but must take several renormalization steps before performing a perturbation step. We must modify the procedure from \cite{Krg-Sch-Ttr} to account for an extra term arising from the curved laplacian $\Delta_g$. It is this extra term, see Lemma \ref{crv-lps}, which bounds from above our blow up rates in the perturbation step.

In what follows we first introduce some notation, describe the renormalization steps in Section 2, and prove the main result in Section 3, which mostly consists of the perturbation step.

\subsection{Notation}

To solve \eqref{slv-m}, we will use the assumptions in Theorem \ref{main} to pick a coordinate chart $U$ where the metric has the desired form \eqref{mtr-cnd}. Choose $r_0 > 0$ so that the set
\begin{equation}
U^{\prime} = \{(r, \theta, \phi) \hspace{10pt} \vert \hspace{10pt} r < r_0\}
\end{equation}
is a compactly contained open subset of $U$, and let $\varphi(r) \in C_0^{\infty}(B_{2r_0}(0))$ be such that $\varphi(r) = 1 $ for $r \leq r_0$.
The conserved 'energy' of \eqref{slv-m} will be denoted by
\begin{equation}
E(u) = \frac{1}{4\pi}\int_{M} \Big(\frac{1}{2}|\partial_t u|^2 + \frac{1}{2}|\nabla u|^2 - \frac{1}{6}|u|^6\Big)  \,dvol
\notag
\end{equation}
but for functions compactly supported in $U^{\prime}$, this will become
\begin{equation}
E(u) = \int_{\mathbb R^+} \Big(\frac{1}{2}|\partial_t u|^2 + \frac{1}{2}|\partial_r u|^2 - \frac{1}{6}|u|^6\Big) \mtr \,dr.
\notag
\end{equation}

Instead of solving \eqref{slv-m} directly, we will first solve
\begin{equation}
\partial_t^2 u - \partial_r^2u - \frac{2}{r}\partial_r u-\varphi(r)\left (\frac{\dot{g}_{\scriptscriptstyle \Omega \Omega}(r)}{\mtr}-\frac{2}{r} \right )\partial_r u = |u|^4 u
\label{slv-m-frs}
\end{equation}
for $r \ge 0$, then modify these solutions to satisfy \eqref{slv-m}. For convenience, we define
\begin{equation}
\kappa(r) = \varphi(r)\left (\frac{\dot{g}_{\scriptscriptstyle \Omega \Omega}(r)}{\mtr}-\frac{2}{r} \right )\frac{1}{r}.
\notag 
\end{equation}

\section{Renormalization step}

Our aim in this section is to prove the following theorem.
\begin{theorem}\label{renormalization}
Given $N\geq1$ there is an approximate solution $u_{2k-1}$ to the equation $-\ppt u+\triangle_g u +u^5=0,$ which has the form
\[u_{2k-1}(t,r)=\lambda^{1/2}(t)\left[W(R)+\frac{1}{(t\lambda)^2}O(R)\right],\]
such that the corresponding error $e_{2k-1}:=-\ppt u_{2k-1}+\triangle_g u_{2k-1} +u_{2k-1}^5$ satisfies
\[\int_{r\leq t}|e_{2k-1}(r,t)|^2r^2dr=O(t^N)\quad\quad t\rightarrow 0+.\]
Here the $O(\cdot)$ are uniform in $r\in[0,t]$ and $t\in(0,t_0)$ for a small fixed $t_0,$ and $\lambda=t^{-1-\nu},~R=\lambda r.$
\end{theorem}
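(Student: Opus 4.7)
We adapt the iterative renormalization scheme of Krieger--Schlag--Tataru \cite{Krg-Sch-Ttr} to the perturbed equation \eqref{slv-m-frs}, working in the self-similar variable $R=\lambda r$. The starting ansatz $u_1:=\lambda^{1/2}(t)W(R)$ solves the stationary flat part exactly because $W''+\tfrac{2}{R}W'+W^5=0$, so the only sources of error are the time derivatives acting on the $t$-dependence of $u_1$, which produce $\dot\lambda/\lambda$ and $\ddot\lambda/\lambda$ pieces of size $t^{-2}$, and the curvature term $\varphi(r)\kappa(r)\,r\pr u_1$ coming from the extra summand in \eqref{slv-m-frs}. Setting $b:=(t\lambda)^{-2}=t^{2\nu}$ and measuring everything against $u_1^5=\lambda^{5/2}W^5$, the time-derivative error is of size $b$ while the curvature error has size $\lambda^{-2}=b^{(1+\nu)/\nu}$, hence strictly smaller.

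Corrections are then built iteratively. We look for an ansatz of the form $u_{2k-1}=\lambda^{1/2}\left[W(R)+\sum_{j=1}^{k-1}b^j q_j(R)+\cdots\right]$, where the extra dots stand for profiles carrying fractional powers of $b$ generated by the curvature (see next paragraph). At each order the leading residual is removed by solving an inhomogeneous linear ODE
\[\LL q_j = f_j(R),\qquad \LL:=\ppR+\tfrac{2}{R}\pR+5W^4(R),\]
where $f_j$ is computed from the preceding iterates and from the Taylor expansion of $|u_{2k-1}|^4u_{2k-1}$ about $\lambda^{1/2}W$. The operator $\LL$ is the linearization of the stationary equation around $W$ and admits the explicit zero mode $\Lambda W:=\tfrac{1}{2}W+RW'$ coming from the scaling symmetry; a second homogeneous solution is obtained by quadratures, so $\LL q_j=f_j$ is solved by variation of parameters. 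The resulting $q_j$ is smooth at $R=0$ and has controlled polynomial growth at $R=\infty$ inherited from $f_j$.

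The one genuinely new ingredient, absent from \cite{Krg-Sch-Ttr}, is the curvature contribution. In self-similar coordinates $\kappa(r)\,r\pr u=\kappa(R/\lambda)\,R\pR u$, but $\kappa$ is a function of $r$ rather than of $R$. By the analyticity of $\mtr$ together with \eqref{mtr-cnd}, $\kappa$ extends smoothly to the origin and we may expand $\kappa(R/\lambda)=\sum_{j\geq 0}c_j(R/\lambda)^{2j}$; every factor $R/\lambda$ costs one power of $\lambda^{-1}=t^{1+\nu}=b^{(1+\nu)/(2\nu)}$. This is the main technical obstacle: the curvature forces us to interleave extra profiles at fractional $b$-powers between the integer ones of the time-derivative expansion, and we must keep careful bookkeeping to ensure that the source $f_j$ for each profile is finite and that the total series really does improve by a fixed power of $b$ at each stage. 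Each extra profile, however, satisfies the same ODE $\LL q=f$ and is handled by the same variation-of-parameters construction.

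After $k$ iterations the residual error is pointwise of order $\lambda^{5/2}b^k$ uniformly on $R\leq t\lambda=t^{-\nu}$. Changing variables back to $r$,
\[\int_{r\leq t}|e_{2k-1}|^2 r^2\,dr = \lambda^{-3}\int_{R\leq t^{-\nu}}|\tilde e_{2k-1}(R,t)|^2 R^2\,dR,\]
and inserting the pointwise bound together with $\lambda^{-3}=t^{3(1+\nu)}$ yields the $O(t^N)$ estimate once $k$ is chosen large enough as a function of $N$ and $\nu$, completing the construction.
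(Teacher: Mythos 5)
Your setup is right as far as it goes: the identification of the two error sources for $u_0=\lambda^{1/2}W(R)$, their relative sizes $b=(t\lambda)^{-2}$ and $\lambda^{-2}$, the elliptic operator $\LL$ with zero mode $\frac12 W+RW'$ solved by variation of parameters, and the observation that the curvature term forces extra profiles with new powers of $t$ into the bookkeeping (the paper handles exactly this by enlarging the KST variable set to $b_1=(t\lambda)^{-1}$, $b_2=r^2$, $b_3=t^2$). But there is a genuine gap: your iteration consists \emph{only} of elliptic solves in $R$, producing profiles $q_j(R)$ depending on $R$ alone, and you then claim the residual is $O(\lambda^{5/2}b^k)$ \emph{uniformly} on $R\leq t\lambda$. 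That claim is false, and it is precisely the reason the Krieger--Schlag--Tataru scheme (and the paper) is a \emph{two}-step alternation. The solutions of $\LL q_j=f_j$ grow like $R(\log R)^m$ as $R\to\infty$; when the time derivatives hit $q_j(\lambda r)$ through $R=\lambda r$, the resulting error again grows like $R(\log R)^{m}$ near $R=\infty$, so at the light cone $R\sim t\lambda$ the error has not improved at all relative to the previous stage (only logarithms are lost). Consequently $\int_{r\leq t}|e|^2r^2\,dr$ cannot be pushed to $O(t^N)$ for arbitrary $N$ by elliptic solves alone.

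The paper's proof (Steps 3--4) removes the leading $R$-growth of $e_{2k-1}$ near $R=\infty$ by solving the free wave equation $t^2(-\ppt+\ppr+\frac2r\pr)v_{2k}=-t^2\tilde e_{2k-1}$ in the self-similar variable $a=r/t$, which reduces to the ODEs $L_\beta W=\cdots$ with $L_\beta=(1-a^2)\ppa+2(a^{-1}+\beta a-a)\pa-\beta^2+\beta$. The even corrections $v_{2k}$ are therefore functions of $a$ (with the singular structure at $a=1$ encoded by the algebra $\QQ$), not of $R$ alone; your ansatz $\sum_j b^jq_j(R)$ has no room for them, and without them the induction on the error cannot close. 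To repair the proposal you would need to reinstate this second step, track the corrections in the mixed classes $IS^m(R^k(\log R)^l,\QQ)$ of functions of $(a,R,b_1,b_2,b_3)$, and verify (as the paper does) that the curvature operator $K(v)=\kappa(r)r\pr v$ maps these classes into admissible error classes, which is where the gains $b_2=t^2a^2$ and $b_3=t^2$ enter.
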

\begin{remark}
Recall that $\triangle_{g}$ is given by $\ppr+\frac{\dmt}{\mt}\pr.$ Note that this construction is local, and in particular restricted to the light cone at $t_0$. By choosing $t_0$ sufficiently small, we may therefore assume that we are in the region where $\varphi(r)\equiv1,$ and ignore $\varphi$ in (\ref{slv-m-frs}).
\end{remark}
We start by giving an outline of the strategy. The idea is to construct approximate solutions $u_k$ by iteratively adding correction terms $v_k$ so that $u_k=u_{k-1}+v_k.$ The starting point will be the rescaled soliton $u_0=\lambda^{1/2}W(\lambda r),$ where $\lambda=\lambda(t)=t^{-1-\nu}.$ The error at step $k$ will be
\[e_k=\Box_gu_k+u_k^5.\]
If we let $\varepsilon=u-u_{k-1}$ and linearize the corresponding equation around $\varepsilon=0$ and replace $u_{k-1}$ by $u_0$ (or in other words look at the approximate solution for the linearized operator around $u_0$) we get
\[\big(-\ppt+\ppr+\frac{\dmt}{\mt}\pr+5u_0^4\big)\varepsilon+e_{k-1}\approx 0.\]
We split this up into two cases. If $r\ll t$ we ignore the time derivative (which we expect to be smaller) and replace the linearized equation by
\[\big(\ppr+\frac{\dmt}{\mt}\pr+5u_0^4\big)\varepsilon+e_{k-1}\approx0.\]

If $r\approx t$ we expect $u_0$ to be small and replace it by zero in the linearized equation to get
\[\big(-\ppt+\ppr+\frac{\dmt}{\mt}\pr\big)\varepsilon+e_{k-1}\approx 0.\]
We will use these two simplified equations to construct our approximate solutions. This will be made more precise shortly, but first we need some notation and definitions. The following notation will be used throughout this section.
\begin{align*}
&N_{2k}(v)=\sum_{j=0}^4{5\choose  j}u_{2k-1}^j v^{5-j},\\
&N_{2k+1}(v)=5(u_{2k}-u_0)v+\sum_{j=0}^{3}{5\choose3}u_{2k}^j v^{5-j},\\
&K(v)=\kappa(r)r\pr v,\\
&\lambda=t^{-1-\nu},\\
&R=\lambda r,\\
&a=\frac{r}{t},\\
&b_1=(t\lambda)^{-1}=t^{\nu},\\
&b_2=r^2,\\
&b_3=(t^\nu\lambda)^{-2}=t^2,\\
&B=(b_1,b_2,b_3),\\
&\beta_0=\frac{\nu-1}{2}>-\frac{1}{2},\\
&\mathcal{C}_0=\{(r,t)\big|0\leq r<t,~0<t<t_0\}.
\end{align*}

$b_2=b_3a^2$ and is not technically necessary, but we keep it as an extra variable because it arises naturally. As such $b_2$ and $b_3$ can be thought of as a gain of $t^2$ and $b_1$ as a gain of $t^{2\nu}.$ Note that since our calculations will be done in the light cone $\mathcal{C}_0,$ there are constants $B_1,B_2,$ and $B_3$ such that $b_j\in[0,B_j],$ and $a\in[0,1].$ Let
\begin{align*}
\Omega:=[0,1]\times[0,\infty)\times[0,B_1]\times[0,B_2]\times[0,B_3],
\end{align*}
and denote the projection of $\Omega$ onto the last four factors by $\Omega_a,$ and the projection onto the last three factors by $\Omega_{a,R}.$ Also, keep in mind that
\begin{align*}
5u_0^4(t,R)=\frac{45\lambda^2}{(3+R^2)^2}.
\end{align*}
\begin{definition}\label{Qdef}
$\mathcal{Q}$ is the algebra of continuous functions $q:[0,1]\rightarrow \RR$ with the following properties:
\begin{description}
\item[]  $~~$ (i) q is analytic in $[0,1)$ with an even expansion at $0.$
\item[]  $~~$ (ii) Near $a=1$ we have an absolutely convergent expansion of the form
    \begin{align*}
    q=q_0(a) +\sum_{i=1}^{i=\infty}(1-a)^{\beta(i)+1}\sum_{j=0}^{\infty}q_{i,j}(a)(\log(1-a))^j
    \end{align*}
    with analytic coefficients $q_0,~q_{i,j},$ such that for each $i$ only finitely many of the $q_{i,j}$ are not identically equal to zero. The exponents $\beta(i)$ are of the from
    \[\sum_{k\in K}\left(\left(2k-3/2\right)\nu-1/2\right)+\sum_{k\in K^\p}\left(\left(2k-1/2\right)\nu-1/2\right),\]
    where $K$ and $K^\p$ are finite sets of positive integers.
\end{description}
\end{definition}
\begin{definition}
$\mathcal{Q}^\p$ is the space of continuous functions $q:[0,1]\rightarrow \RR$ with the following properties:
\begin{description}
\item[]  $~~$ (i) q is analytic in $[0,1)$ with an even expansion at $0.$
\item[]  $~~$ (ii) Near $a=1$ we have an absolutely convergent expansion of the form
    \begin{align*}
    q=q_0(a) +\sum_{i=1}^{i=\infty}(1-a)^{\beta(i)}\sum_{j=0}^{\infty}q_{i,j}(a)(\log(1-a))^j
    \end{align*}
    with analytic coefficients $q_0,~q_{i,j},$ such that for each $i$ only finitely many of the $q_{i,j}$ are not identically equal to zero. $\beta(i)$ are as above.
\end{description}
\end{definition}
\begin{definition}
$S^m(R^k(\log R)^l)$ is the class of analytic functions $v:[0,\infty)\rightarrow \RR$ with the following properties:
\begin{description}
\item[]$~~$ (i) $v$ vanishes of order $m$ at $R=0$ and $v(R)=R^m\sum_{j=0}^{j=\infty}c_jR^{2j}$ for small $R.$
\item[]$~~$ (ii) $v$ has a convergent expansion near $R=\infty,$
    \begin{align*}
    v=\sum_{i=0}^\infty \sum_{j=0}^{l+i}c_{ij}R^{k-i}(\log R)^j.
    \end{align*}
\end{description}
\end{definition}
Finally,
\begin{definition}
a) $S^m(R^k(\log R)^l,\mathcal{Q})$ is the class of analytic functions $v:\Omega\rightarrow \RR$ so that
\begin{description}
\item[]$~~$(i) $v$ is analytic as a function of $R,b_i$
                \[v:\Omega_a\rightarrow\mathcal{Q}\]
\item[]$~~$(ii) $v$ vanishes to order $m$ at $R=0$ and is of the form
                \[v\approx R^m\sum_{j=0}^{j=\infty}c_j(a,b_1,b_2,b_3)R^{2j}\]
            around $R=0.$
\item[]$~~$(iii) $v$ has a convergent expansion at $R=\infty,$
            \begin{align*}
            v(\cdot, R, b_1,b_2,b_3)=\sum_{i=0}^\infty \sum_{j=0}^{l+i}c_{ij}(\cdot,b_1,b_2,b_3)R^{k-i}(\log R)^j
            \end{align*}
         where the coefficients $c_{i}:\Omega_{a,R}\rightarrow \mathcal{Q}$ are analytic with respect to $b_1,b_2,b_3.$
\end{description}

b) $IS^m(R^k(\log R)^l, \mathcal{Q})$ is the class of analytic functions $w$ on the cone $\mathcal{C}_0$ which can be represented as
    \[w(t,\alpha)=v(a, R, b_1,b_2,b_3),\quad v\in S^m(R^k(\log R)^l,\mathcal{Q}).\]
\end{definition}
Note that this representation is in general not unique. We are now ready to start the proof of the theorem.
\begin{proof}
We prove that the corrections $v_k$ can be chosen so that they and the corresponding errors $e_k$ satisfy
\begin{align}
&v_{2k-1}\in\sum_{j=0}^{k-1}\frac{\lambda^{1/2}}{(t^\nu\lambda)^{2j}(t\lambda)^{2(k-j)}}IS^2(R(\log R)^{m_k},\QQ)\label{vodd}\\
&t^2e_{2k-1}\in\sum_{j=0}^{k-1}\frac{\lambda^{1/2}}{(t^\nu\lambda)^{2j}(t\lambda)^{2(k-j)}}IS^0(R(\log R)^{p_k},\QQ^\p)\label{eodd}\\
&v_{2k}\in\sum_{j=0}^{k-1}\frac{\lambda^{1/2}}{(t^\nu\lambda)^{2j}(t\lambda)^{2(k+1-j)}}IS^2(R^3(\log R)^{p_k},\QQ)\label{veven}\\
&t^2e_{2k}\in\sum_{j=0}^{k-1}\frac{\lambda^{1/2}}{(t^\nu\lambda)^{2j}(t\lambda)^{2(k-j)}}\big[IS^0(R^{-1}(\log R)^{q_k},\QQ)\nonumber\\&\quad\quad\quad\quad\quad\quad\quad\quad\quad\quad\quad\quad+b_1^2IS^0(R(\log R)^{q_k},\QQ^\p)\nonumber\\&\quad\quad\quad\quad\quad\quad\quad\quad\quad\quad\quad\quad+\sum_{i=2}^3 b_iIS^0(R(\log R)^{q_k},\QQ^\p)\big]\label{eeven}.
\end{align}
Here $m_k,p_k,q_k$ are integers whose exact value is not important for us, and which satisfy $m_1=p_1=0,~q_1=1.$ The two step construction of the correction terms here corresponds to the different approximations of the linearized equation in the regimes $r\ll t$ and $r\approx t$ which we hinted at earlier. It is convenient for future use to introduce the operator $\DD:=\frac{1}{2}+r\partial_r=\frac{1}{2}+R\partial_R.$
\\ \\
$\mathbf{Step~ 0:}$\\
\begin{align*}
e_0&=u_0^5-\Box u_0\\
   &=-\pt\left[\lambda^{\frac{1}{2}}\left(\frac{\lambda^\p}{\lambda}\right)\DD W\right]+\kappa(r)r\pr\left[\lambda^{\frac{1}{2}}W\right]\\
   &=\lambda^{\frac{1}{2}}\left[-\left(\frac{\lambda^\p}{\lambda}\right)^\p\DD W-\left(\frac{\lambda^\p}{\lambda}\right)^2\DD^2W+\kappa(r)R\partial_R W\right].\\
\end{align*}
Therefore, there are constants $c_1,c_2,$ and $c_3,$ depending on $\nu$ and whose exact value is irrelevant for us, such that
\begin{align*}
t^2e_0&=\lambda^{1/2}\left[c_1\left(\frac{1-R^2/3}{(1+R^2/3)^\frac{3}{2}}\right)+c_2\left(\frac{9-30R^2+R^4}{(1+R^2/3)^{\frac{5}{2}}}\right)+\frac{c_3t^2\kappa(r)R^2}{(1+R^2/3)^{\frac{3}{2}}}\right]\\
      &\in \lambda^{\frac{1}{2}}\left(IS^0(R^{-1})+t^2\kappa(r)IS^0(R^{-1})\right)\subseteq \lambda^{\frac{1}{2}}IS^0(R^{-1}).
\end{align*}
$\mathbf{Step ~1:}$\\
Write $e_{2k-2}=\sum_je_{2k-2,j}$ where
\begin{align*}
t^2e_{2k-2,j}\in\frac{\lambda^{1/2}}{(t^\nu\lambda)^{2j}(t\lambda)^{2(k-1-j)}}\big[&IS^0(R^{-1}(\log R)^{q_{k-1}},\QQ)\\
&+\sum_{i=1}^3 b_iIS^0(R(\log R)^{q_{k-1}},\QQ^\p)\big].
\end{align*}
If $e_{2k-2,j}=e_{2k-2,j}(t,a,B,R),$ we let $e_{2k-2,j}^0(t,a,R):=e_{2k-2,j}(t,a,0,R),$ and $e_{2k-2,j}^1=e_{2k-2,j}-e_{2k-2,j}^0$ (unless $k=1$ in which case we let $e^0_{0}=e_0$). We also let $e_{2k-2}^0=\sum_je_{2k-2,j}^0$ and similarly for $e_{2k-1}^1.$ Define $v_{2k-1,j}$ by taking $t$ and $a$ as parameters and requiring that $v_{2k-1,j}$ solve the following ODE in $R,$ subject to vanishing boundary conditions at $R=0:$
\begin{align*}
(t\lambda)^2\Big(-\ppR-\frac{2}{r}\pR-\frac{45}{(3+R^2)^2}\Big)v_{2k-1}=t^2e_{2k-2,j}^0.
\end{align*}
It follows from Lemma 2.1 in \cite{Krg-Sch} that
\[v_{2k-1,j}\in\frac{\lambda^{1/2}}{(t^\nu\lambda)^{2j}(t\lambda)^{2(k-j)}}IS^2(R(\log R)^{m_k},\QQ).\]
Letting $v_{2k-1}=\sum_jv_{2k-1,j}$ we see that (\ref{vodd}) is satisfied.\\\\
$\mathbf{Step ~2:}$\\
The error from the previous step is
\[e_{2k-1}=e^1_{2k-2}+N_{2k-1}(v_{2k-1})+K(v_{2k-1})+E^tv_{2k-1}+E^av_{2k-1},\]
where $N_{2k-1}$ and $K$ are defined above, $E^tv_{2k-1}$ contains the terms in $\ppt v_{2k-1}$ where no derivatives fall on $a$ and $E^av_{2k-1}$ the terms in $(\ppt-\ppr-\frac{2}{r}\pr)v_{2k-1}$ where at least one derivative falls on $a.$ We study $t^2 e_{2k-1}$ term by term, and for this we work on the level of $v_{2j-1,j}$ and sum over $j$ at the end.\\\\
$e^1_{2k-2}$ belongs to the right hand side of (\ref{eodd}) because the $b_j$s contribute the necessary gain of time decay. Indeed, for $b_1$ and $b_3$ this follows from the definition of these variables, and for $b_2$ from the observation that $b_2=t^2a^2.$ \\\\
For the other terms in the error, write $v_{2k-1,j}=t^\bbeta w_{2k-1,j}(a,R)$ where $\bbeta=\bbeta(k,j)$ is defined by $t^\bbeta=\frac{\lambda^{1/2}}{(t^\nu\lambda)^{2j}(t\lambda)^{2(k-j)}},$ and $w_{2k-1,j}\in IS^2(R(\log R)^{m_k},\mathcal{Q}).$ For $t^2E^t$ we just need to note that
\[t^2\pt(t^\bbeta IS^2(R(\log R)^{m_k}))\subseteq t^\bbeta IS^2(R(\log R)^{m_k}).\]
\linebreak
To simplify the notation, for $t^2E^a$ we drop the indices $2k-1$ and $j$ and write $w= w_{2k-1,j}.$ Note that $t^2E^a$ is then a linear combination the following terms
\[t(\pt t^\bbeta)aw_a,~t^\bbeta aw_a,~t^\bbeta aRw_{aR},~t^\bbeta a^{-1}Rw_{aR},~t^\bbeta(1-a^2)w_{aa}.\]
That $t^2E^a$ has the right form follows from the fact that $a\pa,~a^{-1}\pa,~(1-a^2)\ppa$ map $\QQ$ to $\QQ^\p$ .\\\\
With the same notation as before we have
\[t^2 \kappa r\pr v =t^2 \kappa t^\bbeta aw_a+t^2 \kappa t^\bbeta Rw_R,\]
which implies that $K(v_{2k+1})$ is of the right form.\\\\
For $N_{2k-1}(v_{2k-1})$ we work on the level of $v_{2k-1}$ (rather than $v_{2k-1,j}$) and begin by noting that $u_{2k-2}-u_0\in\frac{\lambda^{1/2}}{(t\lambda)^2}IS^2(R(\log R)^n,\QQ),$ for some integer $n$ depending on $k.$\\\\
Linear term (in $v_{2k+1}$): note that
\begin{align*}
u_{2k-2}^4-u_0^4=&(u_{2k-2}-u_0)^4+4(u_{2k-2}-u_0)^3u_0\\
                 &+6(u_{2k-2}-u_0)^2u_0^2+4(u_{2k-2}-u_0)^3u_0.
\end{align*}
We compute (suppressing $\QQ$ for simplicity of notation)
\begin{align*}
t^2(u_{2k-2}-u_0)^4v_{2k-1}&\in \sum_{j=0}^{k-1}\frac{t^{\bbeta(k,j)}(t\lambda)^2}{(t\lambda)^8}IS^2(R(\log R)^{m_k})IS^8(R^4(\log R)^{4n})\\
                         &\subseteq\sum_{j=0}^{k-1}\frac{t^\bbeta}{(t\lambda)^6}IS^{10}(R^5(\log R)^{p_k})\\
                         &\subseteq\sum_{j=0}^{k-1}t^\bbeta a^6IS^4(R^{-1}(\log R)^{p_k})\\
                         &\subseteq \sum_{j=0}^{k-1}t^\bbeta IS^4(R^{-1}(\log R)^{p_k}).
\end{align*}
Similarly,
\begin{align*}
t^2(u_{2k-2}-u_0)^3u_0v_{2k-1}&\in\sum_{j=0}^{k-1}\frac{t^\bbeta(t\lambda)^2}{(t\lambda)^6}S^0(R^{-1})S^2(R(\log R)^{m_k})S^6(R^3(\log R)^{3n})\\
                            &\subseteq \sum_{j=0}^{k-1}\frac{t^\bbeta}{(t\lambda)^4}S^8(R^3(\log R)^{p_k})\subseteq \sum_{j=0}^{k-1}t^\bbeta S^4(R^{-1}(\log R)^{p_k}),
\end{align*}
and
\begin{align*}
t^2(u_{2k-2}-u_0)^2u_0^2v_{2k-1}&\in\sum_{j=0}^{k-1}\frac{(t\lambda)^2t^\bbeta}{(t\lambda)^4}S^0(R^{-2})S^4(R^2(\log R)^{2n})S^2(R(\log R)^{m_k})\\
                              &\subseteq\sum_{j=0}^{k-1}\frac{t^\bbeta}{(t\lambda)^2}S^6(R(\log R)^{p_k})\subseteq \sum_{j=0}^{k-1}t^\bbeta S^4(R^{-1}(\log R)^{p_k}).
\end{align*}
Finally,
\begin{align*}
t^2(u_{2k-2}-u_0)u_0^3v_{2k-1}&\in\sum_{j=0}^{k-1}\frac{t^\bbeta(t\lambda)^2}{(t\lambda)^2}S^0(R^{-3})S^2(R(\log R)^{m_k})S^2(R(\log R)^n)\\
                              &\subseteq\sum_{j=0}^{k-1} t^\bbeta S^4(R^{-1}(\log R)^{p_k}).
\end{align*}
Quintic term:
\begin{align*}
t^2v_{2k-1}^5&\in\sum_{j_1\dots j_5=0}^{k-1}\frac{t^2\lambda^{5/2}IS^{10}(R^5,\QQ)}{(t^\nu\lambda)^{2(j_1+\dots+j_5)}(t\lambda)^{10k-2(j_1+\dots+j_5)}}\\
             &\subseteq\sum_{j,j_1,\dots j_4=0}^{k-1}\frac{a^6t^{\bbeta(k,j)}IS^4(R^{-1},\QQ)}{(t^\nu\lambda)^{2(j_1+\dots+j_4)}(t\lambda)^{8(k-1)-2(j_1+\dots+j_4)}}\\
             &\subseteq\sum_{j,j_1,\dots j_4=0}^{k-1}a^6b_1^{8(k-1)-4(j_1+\dots+j_4)}b_3^{(j_1+\dots+j_4)}t^{\bbeta(k,j)}IS^4(R^{-1},\QQ)\\
             &\subseteq\sum_{j=0}^{k-1}t^{\bbeta(k,j)}IS^4(R^{-1},\QQ).
\end{align*}
Quadratic term: note that $u_{2k-2}\in \lambda^{1/2}IS^0(R^{-1})$ (we are suppressing the algebra $\QQ$ again).
\begin{align*}
t^2u_{2k-2}^3v_{2k-1}^2&\in\sum_{j,l=0}^{k-1}\frac{(t\lambda)^{2}\lambda^{\frac{1}{2}}IS^0(R^{-3})IS^4(R^2(\log R)^{2m_k})}{(t^\nu\lambda)^{2(j+l)}(t\lambda)^{2(2k-l-j)}}\\
                     &\subseteq \sum_{j,l=0}^{k-1}t^{\bbeta(k,j)}b_3^lb_1^{2(k-1-l)}IS^4(R^{-1}(\log R)^{p_k},\QQ)\\
                     &\subseteq\sum_{j=0}^{k-1}t^{\bbeta(k,j)}IS^4(R^{-1}(\log R)^{p_k},\QQ).
\end{align*}
Cubic term:
\begin{align*}
t^2u_{2k-2}^2v_{2k-1}^3&\in\sum_{j_1,j_2,j_3=0}^{k-1}t^2t^{\bbeta(k,j_1)}t^{\bbeta(k,j_2)}t^{\bbeta(k,j_3)}\lambda IS^0(R^{-2})IS^6(R^3(\log R)^{3m_k})\\
                     &\subseteq \sum_{j,j_1,j_2=0}^{k-1}b_3^{(j_1+j_2)}b_1^{2(2k-1-j_1-j_2)}t^{\bbeta(k,j)}IS^6(R(\log R)^{p_k}),
\end{align*}
which has the right form. The quartic term is similar.\\\\
$\mathbf{Step~3:}$\\
Near $R=\infty$ we isolate the principal part $\tilde{e}_{2k-1}$ of $e_{2k-1}$ by ignoring terms that involve a factor of $b_1^2,~b_2$ or $b_3$ or decay at least as fast as $R^{-1}(\log R)^{p_k+2}.$ Let us be more precise and write $t^2\tilde{e}_{2k-1}=t^2\sum_{j=0}^{k-1}\tilde{e}_{2k-1,j},$ where
\begin{align}
t^2\tilde{e}_{2k-1,j}&=t^{\bbeta(k,j)}\Big[\sum_{i=0}^{p_k}q_{i,j}(a)R(\log R)^i+\sum_{i=0}^{p_k+1}{\tilde{q}}_{i,j}^1(a)(\log R)^{i}\nonumber\\
                      &\quad\quad\quad\quad\quad\quad\quad\quad b\sum_{i=0}^{p_k}{\tilde{q}}_{i,j}^2(a)R(\log R)^i+b\sum_{i=0}^{p_k+1}{\tilde{{\tilde{q}}}}_{i,j}(a)(\log R)^i \Big]\nonumber\\
                     &=(t\lambda)t^{\bbeta(k,j)}\Big{[}\sum_{i=0}^{p_k}aq_{i,j}(a)(\log R)^i+b\sum_{i=0}^{p_k+1}({\tilde{q}}_{i,j}^1(a)+a{\tilde{q}}_{i,j}^2)(\log R)^{i}\nonumber\\
                     &\quad\quad\quad\quad\quad\quad\quad\quad+b^2\sum_{i=0}^{p_k+1}{\tilde{{\tilde{q}}}}_{i,j}(a)(\log R)^i\Big{]},\label{aRHS}
\end{align}
for some $q_{i,j},\tilde{q}_{i,j}^{1,2},\tilde{\tilde{q}}_{i,j}\in\QQ^\p,$ with $\tilde{q}_{p_k+1,j}^2=0$ (to be precise $q_{i,j},$ etc, depend on $k$ as well, but we gloss over this to simplify the notation). We want to define $\tilde{v}_{2k,j}$ by
\[t^2(-\ppt+\ppr+\frac{2}{r}\pr)\tilde{v}_{2k,j}=-t^2\tilde{e}_{2k-1,j}.\]
Comparing with (\ref{aRHS}) we seek a solution of the form $\tilde{v}_{2k,j}(t,a)=t^{\bbeta-\nu} w_{2k,j}(a)$ where $w_{2k,j}$ has the form
\begin{align}
w_{2k,j}=&\sum_{i=0}^{p_k}W^i_{2k,j}(a)(\log R)^i+b\sum_{l=1,2}\sum_{i=0}^{p_k+1}{\tilde{W}}_{2k,j}^{i,l}(\log R)^{i}\nonumber\\
                     &+b^2\sum_{i=0}^{p_k+1}{\tilde{{\tilde{W}}}}_{2k,j}^i(a)(\log R)^i.\label{WRHS}
\end{align}
We match the powers of the logarithms in (\ref{aRHS}) and (\ref{WRHS}) to obtain the following equations for $W^i_{2k,j}$
\begin{align*}
&t^2\left(-\ppt+\ppr+\frac{2}{r}\pr\right)\left(t^{\bbeta-\nu} W^i_{2k,j}(a)\right)=t^{\bbeta-\nu} (aq_{i,j}(a)-F_{i,j}(a)),\\
&t^2\left(-\ppt+\ppr+\frac{2}{r}\pr\right)\left(t^\bbeta \tilde{W}^{i,l}_{2k,j}(a)\right)=t^\bbeta (a^{l-1}\tilde{q}_{i,j}(a)-\tilde{F}^l_{i,j}(a)),~l=1,2,\\
&t^2\left(-\ppt+\ppr+\frac{2}{r}\pr\right)\left(t^{\bbeta+\nu} \tilde{\tilde{W}}^i_{2k,j}(a)\right)=t^{\bbeta+\nu} (\tilde{\tilde{q}}_{i,j}(a)-\tilde{\tilde{F}}_{i,j}(a)).
\end{align*}
Here $F_{i,j}$ (and similarly $\tilde{F}^{l}_{i,j,}$ and $\tilde{\tilde{F}}_{i,j}$) can be determined form equations (\ref{aRHS}) and (\ref{WRHS}), and depends only on $a,~W^{i+1}_{2k,j},~\partial_aW^{i+1}_{2k,j}$ and $W^{i+2}_{2k,j},$ where we are using the convention $W^{i}_{2k,j}=0$ for $i>p_k$ (and $\tilde{W}^{i,l}_{2k,j,}=\tilde{\tilde{W}}^i_{2k,j}=0$ for $i>p_k+1$). Note that we are again suppressing the $k$ dependency in the notation. After some rearrangement, this can be written as a system of equations in the variable $a$ as
\begin{align}
&L_\beta W^{i}_{2k,j}=aq_{i,j}(a)-F_{i,j}(a),\quad\quad \beta=\bbeta-\nu,\nonumber\\
&L_\beta \tilde{W}^{i,l}_{2k,j}=a^{l-1}\tilde{q}^l_{i,j}(a)-\tilde{F}^l_{i,j}(a),\quad\quad \beta=\bbeta,\label{lbetaeqn}\\
&L_\beta \tilde{\tilde{W}}^{i}_{2k,j}=\tilde{\tilde{q}}_{i,j}(a)-\tilde{\tilde{F}}_{i,j}(a),\quad\quad \beta=\bbeta+\nu,\nonumber
\end{align}
where $L_\beta$ is defined by
\[L_\beta=(1-a^2)\ppa+2(a^{-1}+\beta a-a)\pa-\beta^2+\beta.\]
It is proved in equation (2,70) in \cite{Krg-Sch} that (\ref{lbetaeqn}) can be solved with zero Cauchy data at $a=0$ yielding solutions which satisfy
\begin{align}
&W^{i}_{2k,j}\in a^3\QQ,\quad\quad i=0,\dots,p_k,\nonumber\\
&\tilde{W}^{i,l}_{2k,j}\in a^{l+1}\QQ,\quad\quad i=0,\dots,p_k+1,~l=1,2,\label{wclaim}\\
&\tilde{\tilde{W}}^{i}_{2k,j}\in a^2\QQ,\quad\quad i=0,\dots,p_k+1.\nonumber
\end{align}
We are now almost ready to define the next correction $v_{2k}.$ However, we need to modify the expression for $\tilde{v}_{2k,j}$ to ensure an even expansion in $R$ and eliminate the singularity of $\log R$ at $R=0.$ With the notation $\langle R\rangle=\sqrt{1+R^2},$ we let
\begin{align}
v_{2k,j}=t^{\beta(k,j)}&\Bigg(R^2\langle R\rangle^{-1}\sum_{i=0}^{p_k}a^{-1}W^{i}_{2k,j}\left(\frac{1}{2}\log(1+R^2)\right)^i\nonumber\\&~+\sum_{i=0}^{p_k+1}\tilde{W}^{i,l}_{2k,j}\left(\frac{1}{2}\log(1+R^2)\right)^i\nonumber\\
&~+bR^2\langle R\rangle^{-1}\sum_{i=0}^{p_k+1}a^{-1}\tilde{W}^{i,2}_{2k,j}\left(\frac{1}{2}\log(1+R^2)\right)^i\nonumber\\&~+b\sum_{i=0}^{p_k+1}\tilde{\tilde{W}}^i_{2k,j}\left(\frac{1}{2}\log(1+R^2)\right)^i\Bigg),\label{vevendef}
\end{align}
and define $v_{2k}=\sum_{j=0}^{k-1}v_{2k,j}.$ That (\ref{veven}) is satisfied is a consequence of (\ref{wclaim}).\\\\
$\mathbf{Step~4:}$\\
The error from the previous step is
\[e^1_{2k-1}+K(v_{2k})+N_{2k}(v_{2k})+(e_{2k-1}^0-(-\ppt+\ppr+\frac{2}{r}\pr)v_{2k}),\]
where $K$ and $N_{2k}$ are defined above, and $e^1_{2k-1}=e_{2k-1}-e^0_{2k-1}$ with $e^0_{2k-1}=\sum_{j=0}^{k-1}e^0_{2k-1,j}$ and
\begin{align*}
t^2e^0_{2k-1,j}=t^{\bbeta(k,j)}&\Bigg[\sum_{i=0}^{p_k}q_{i,j}\frac{R^2}{\langle R\rangle}\left(\frac{1}{2}\log(1+ R^2)\right)^i+\sum_{i=0}^{p_k+1}{\tilde{q}}_{i,j}^1\left(\frac{1}{2}\log(1+ R^2)\right)^i\nonumber\\
                      & b\sum_{i=0}^{p_k+1}{\tilde{q}}_{i,j}^2\frac{R^2}{\langle R\rangle}\left(\frac{1}{2}\log(1+ R^2)\right)^i+b\sum_{i=0}^{p_k+1}{\tilde{{\tilde{q}}}}_{i,j}\left(\frac{1}{2}\log(1+ R^2)\right)^i \Bigg].
\end{align*}
Again we treat the error term by term. $e^1_{2k-1}$ contains one set of terms having a factor of $b_1^2,~b_2$ or $b_3,$ which belong to the right hand side of (\ref{eeven}) by construction, and another set coming from subtracting the leading terms in the expansion in $R$ near $R=\infty.$ To control the latter, notice that after subtracting the first two leading terms, the next term grows more slowly (decays faster) by a factor of $R^2,$ so in view of (\ref{eeven}) it suffices to show that
\begin{align}
IS^0(R^{-1}(\log R)^{q_k},\QQ^\p)\subseteq IS^0(R^{-1}(\log R)^{q_k},\QQ)+b_1^2IS^0(R(\log R)^{q_k},\QQ^\p).\label{ISinclusion}
\end{align}
For this, we decompose an element $w\in IS^0(R^{-1}(\log R)^{q_k},\QQ^\p)$ as $w=(1-a^2)w+a^2w=(1-a^2)w+\frac{R^2}{(t\lambda)^2}w,$ implying
\[w\in IS^0(R^{-1}(\log R)^{q_k},\QQ)+b_1^2IS^2(R(\log R)^{q_k},\QQ^\p)\]
as desired.\\\\
We consider $t^2\left(e_{2k-1,j}^0-(-\ppt+\ppr+\frac{2}{r}\pr)v_{2k,j}\right)$ next. This would be zero if we replaced $\frac{R^2}{\sqrt{1+R^2}}$ by $R$ and $\frac{1}{2}\log(1+R^2)$ by $\log R.$ This means that the so this error involves terms where at least one $t$ or $r$ derivative falls on $R\langle R\rangle,$ or on the difference of the logarithmic terms. The analysis is similar for the two cases, and as the $\log R$ terms are already treated in \cite{Krg-Sch} we focus on the former. In view of (\ref{vevendef}), for the contribution of $\frac{\partial_r}{r}$ we need to look at
\begin{align*}
\frac{t^2t^{\bbeta-\nu}W^{i}_{2k,j}\left(\frac{1}{2}\log(1+R^2)\right)^i\partial_r\left(\frac{R}{\sqrt{1+R^2}}\right)}{r}&=\frac{t^{\bbeta-2\nu}a^{-1}W^{i}_{2k,j}\left(\frac{1}{2}\log(1+R^2)\right)^i}{(1+R^2)^{3/2}}\\
&\in \frac{t^\bbeta R^2 \left(\frac{1}{2}\log(1+R^2)\right)^i}{(1+R^2)^{3/2}}\QQ,
\end{align*}
where we have used (\ref{wclaim}) for the last step. Using (\ref{ISinclusion}) this can be placed in the right hand side of (\ref{eeven}). For the contribution of $\partial_r^2,$ we consider the cases when only one or both $r$ derivatives fall on $R\langle R\rangle.$ The former is taken care of just as the case of $\frac{1}{2}\partial_r$ above. For the latter note that
\[\ppr\big(\frac{R}{\sqrt{1+R^2}}\big)=\frac{-3\lambda^2R}{(1+R^2)^{5/2}}=\frac{-3R^3}{r^2(1+R^2)^{5/2}},\]
which is treated as before. The contribution of $\partial_t^2$ can be dealt with similarly.\\\\

To control $t^2K(v_{2k,j})=t^2\kappa r\pr v_{2k,j}$ we again use the representation (\ref{vevendef}). Noting that $a\partial_a$ sends $\QQ$ to $\QQ^\p$ we can place this contribution in $b_2IS^0(R(\log R)^{q_k},\QQ^\p),$ which is consistent with (\ref{eeven}).\\\\
It remains to consider $N_{2k}(v_{2k}).$ First note that
\[u_{2k-1}-u_0\in\frac{\lambda^{1/2}}{(t\lambda)^2}IS^2(R(\log R)^n,\QQ),\]
for some positive integer $n,$ so
\begin{align*}
u_{2k-1}&\in\lambda^{1/2}W(R)+\frac{\lambda^{1/2}}{(t\lambda)^2}IS^2(R(\log R)^n,\QQ)\\
        &\subseteq \lambda^{1/2}S^0(R^{-1})+\frac{\lambda^{1/2}}{(t\lambda)^2}IS^2(R(\log R)^n,\QQ).
\end{align*}
The contribution of the quintic term can be computed as
\begin{align*}
t^2v_{2k}^5&\in\sum_{j_1,\dots, j_5=0}^{k-1}t^2t^{\bbeta(k+1,j_1)+\dots+\bbeta(k+1,j_5)}IS^{10}(R^{15}(\log R)^{5p_k},\QQ)\\
           &\subseteq \sum_{j,j_2,\dots, j_5=0}^{k-1}\frac{t^{\bbeta(k,j)}(1+R^2)^{16/2}IS^{10}(R^{-1}(\log R)^{q_k},\QQ)}{(t\lambda)^{16}b_1^{-8(k-1)+2(j_1+\dots+j_4)}b_3^{-(j_1+\dots+j_4)}}\\
           &\subseteq \sum_{j=0}^{k-1}t^{\bbeta(k,j)}IS^0(R^{-1}(\log R)^{q_k},\QQ).
\end{align*}
For the linear term we have
\begin{align*}
t^2u_{2k-1}^4v_{2k}&\in (t\lambda)^2\Big(S^0(R^{-1})+a^2IS^0(R^{-1}(\log R)^n,\QQ)\Big)^4\\&\quad\quad\quad\times\sum_{j=0}^{k-1}t^{\bbeta(k+1,j)}IS^2(R^3(\log R)^{p_k},\QQ)\\
                   &\subseteq \sum_{j=0}^{k-1}\frac{\lambda^{1/2}(t\lambda)^2}{(t^\nu\lambda)^{2j}(t\lambda)^{2(k-j)+2}}IS^0(R^{-4}(\log R)^{4n},\QQ)IS^2(R^3(\log R)^{p_k},\QQ)\\
                   &\subseteq \sum_{j=0}^{k-1}\frac{\lambda^{1/2}}{(t^\nu\lambda)^{2j}(t\lambda)^{2(k-j)}}IS^2(R^{-1}(\log R)^{q_k},\QQ).
\end{align*}
The quadratic, cubic, and quartic terms can be treated similarly.
\end{proof}

\section{Perturbation step}

After the renormalization step we must now solve the equation
\begin{align}
\partial_{tt} \varepsilon - \frac{1}{r^2}\partial_r ( r^2 \partial_r \varepsilon) - 5 \lambda^2(t) W^4(\lambda(t) r) \varepsilon =
N_{2k-1}(\varepsilon) + K(\varepsilon) + e_{2k-1}.
\label{prt-qtn}
\end{align}
We change variables several times in order to work with coordinates better adapted to the self similar nature of our solution. Let $\varepsilon(t,x) = v(\tau(t),\lambda(t)x)$, $y = \lambda x$, $\tau$ be a time coordinate that satisfies $\dot{\lambda} = \frac{\,d \lambda}{\,d\tau}$, and
\begin{equation}
\partial_{t} \varepsilon(t,r) = \tau^{\prime}(t)(v_{\tau}+\dot{\lambda}\lambda^{-1}y\partial_y v).
\notag 
\end{equation}
Letting $Rv = \tilde{\varepsilon}$, and $\mathcal{L} = -\partial_{R}^2 - 5W^4(R)$, we obtain the equation
\begin{align}
&(\partial_{\tau} + \dot{\lambda}\lambda^{-1}R\partial_R)^2\tilde{\varepsilon}
-\dot{\lambda}\lambda^{-1}(\partial_{\tau} + \dot{\lambda}\lambda^{-1}R\partial_R)\tilde{\varepsilon}
+\mathcal{L}\tilde{\varepsilon}
 =
\notag \\
& \quad \lambda^{-2}R[
N_{2k-1}(R^{-1}\tilde{\varepsilon})
+K(R^{-1}\tilde{\varepsilon})+e_{2k-1}].
\label{bnd-m}
\end{align}
By transforming this equation so that $\mathcal{L}$ is diagonal, \eqref{bnd-m} will become something close enough to a transport equation to solve. We now recall some of the spectral properties of $\mathcal{L}$. 

On the domain
\begin{equation}
\textrm{Dom}(\mathcal{L})
= \{f \in L^2((0,\infty)) : f, f^{'} \in \textrm{AC}([0,R]) \hspace{10pt} \forall R, f(0)=0, f'' \in L^2((0,\infty))\}
\notag
\end{equation}
the operator $\mathcal{L}$ is self-adjoint. Therefore there exists a unitary operator $\mathcal{F}$ that diagonalizes $\mathcal{L}$. The spectrum of $\mathcal{L}$ is continuous for $\xi   \ge 0$, and has a lone discrete negative eigenvalue, which we will denote by $\xi_{-}$. This discrete negative eigenvalue necessitates some comments about notation. 
The derivative with respect to the spectral variable $\partial_{\xi}$ will be understood to be $0$ for this discrete part.
The continuous part of the spectral measure we denote by $\rho$. From the WKB ansatz, one expects the eigenfunctions $f_{\xi}(R)$ corresponding to $\xi >0$ to have asymptotic form
\begin{equation}
f_{\xi}(R) \simeq e^{-iR\sqrt{\xi}}.
\label{smp-frm}
\end{equation}
Define
\begin{equation}
\mathcal{K} = \mathcal{F} R \partial_R \mathcal{F}^{-1} - \xi \partial_{\xi}.
\label{k-dfn}
\end{equation}
Upon applying $\mathcal{F}$ to $\eqref{bnd-m}$, and using \eqref{k-dfn} several times, we obtain
\begin{align}
& \left (
\begin{array}{cc}
\partial_{\tau}^2 + \xi_{-} & 0 \\
0 & \skwdrv^2+\xi \\
\end{array}
\right )\mathcal{F}\tilde{\varepsilon}
=
\beta(I - 2 \mathcal{K})\skwdrv \mathcal{F}\tilde{\varepsilon}
\notag \\
& \quad - \beta^2 (\mathcal{K}^2 - \mathcal{K} + 2 [\xi \partial_{\xi}, \mathcal{K}])\mathcal{F}\tilde{\varepsilon}
 + \lambda^{-2} \mathcal{F} R (N_{2k-1}(R^{-1} \tilde{\varepsilon})+ \lambda^{-2} \mathcal{F} R (K(R^{-1} \tilde{\varepsilon})+e_{2k-1}).
\label{gg-m}
\end{align}
We will solve \eqref{gg-m} with a contraction mapping argument with the norm
\begin{equation}
\|u\|_{L^{\infty,N}L_{|\xi|^{s/2}\rho \,d\xi}^2}
=
\sup_{\tau > 1} \tau^N \left ( |u(\xi_{-},\tau)| +
\left ( \int_{\mathbb R^+} |u(\xi,\tau)|^2|\xi|^{s/2} \rho \,d\xi \right )^{1/2} \right ).
\label{wgh-spc}
\end{equation}
By using the definition of $\mathcal{L}$, along with $\mathcal{F}$, it is easy to prove the following result from \cite{Krg-Sch-Ttr} 
 about the equivalence of \eqref{wgh-spc} with a weighted Sobolev norm.
\begin{lemma}
\begin{equation}
\|\frac{1}{R}\mathcal{F}^{-1}u\|_{L^{\infty,N}H^s} \lesssim \|u\|_{L^{\infty,N}L_{|\xi|^{s/2}\rho \,d\xi}^2} \lesssim \|\frac{1}{R}\mathcal{F}^{-1}u\|_{L^{\infty,N}H^s}.
\notag
\end{equation}
\label{qvl-nrm}
\end{lemma}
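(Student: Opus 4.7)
The lemma asserts an equivalence between two norms measuring a common Sobolev-type regularity of the function $R^{-1}\mathcal{F}^{-1}u$: one phrased on the physical side as an $H^s$ norm, the other on the spectral side as a weighted $L^2$ norm with weight $|\xi|^{s/2}\rho\,d\xi$. My plan is to strip off the $L^{\infty,N}$ factor $\sup_\tau\tau^N$ (which preserves any pointwise-in-$\tau$ spatial comparison) and then chain three standard ingredients: Plancherel together with functional calculus for $\mathcal{F}$; equivalence of $\mathcal{L}$-based fractional Sobolev norms with the flat analogues; and the radial-reduction isometry in three dimensions.

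Fix $\tau$ and write $f(R)=\mathcal{F}^{-1}u(R)$. Since $\mathcal{F}$ is unitary from $L^2((0,\infty))$ to $L^2(d\rho)\oplus\mathbb{C}$ (the $\mathbb{C}$-factor accounting for the discrete eigenvalue $\xi_-$) and conjugates $\mathcal{L}$ to multiplication by $\xi$, the functional calculus gives
\[
\int_{\mathbb{R}^+}|u(\xi)|^2|\xi|^{s/2}\rho\,d\xi + |u(\xi_-)|^2 \;\simeq\; \||\mathcal{L}|^{s/4}f\|_{L^2}^2,
\]
where $|\mathcal{L}|^{s/4}$ is defined by the spectral multiplier $|\xi|^{s/4}$; the discrete contribution is a finite-rank summand trivially comparable to $|u(\xi_-)|^2$. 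Since the potential $-5W^4(R)$ is smooth, bounded, and decays like $R^{-4}$, the operator $\mathcal{L}$ is a relatively compact perturbation of $-\partial_R^2$ on the Dirichlet domain at $R=0$, so standard resolvent/interpolation estimates as in \cite{Krg-Sch-Ttr} yield $\||\mathcal{L}|^{s/4}f\|_{L^2}\simeq \|(-\partial_R^2)^{s/4}f\|_{L^2}$ on the relevant domain (modulo a lower-order $L^2$ piece absorbed by the discrete contribution). Finally, the radial reduction $\phi\mapsto f=\sqrt{4\pi}R\phi$ furnishes a unitary identification of radial $L^2(\mathbb{R}^3)$ with $L^2((0,\infty))$ under which $-\Delta_{\mathbb{R}^3}$ is conjugated to $-\partial_R^2$, so $\|(-\partial_R^2)^{s/4}f\|_{L^2}\simeq\|\phi\|_{H^s(\mathbb{R}^3)} = \|R^{-1}\mathcal{F}^{-1}u\|_{H^s}$.

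Chaining these three equivalences produces the required spatial bound for each $\tau$, and then applying $\sup_\tau\tau^N$ recovers the full $L^{\infty,N}$-weighted statement. The main technical point is the middle step---the equivalence of $|\mathcal{L}|^{s/4}$- and $(-\partial_R^2)^{s/4}$-based fractional Sobolev norms---which is sensitive both to the behavior of the spectral density $\rho(\xi)$ near $\xi=0$ and to the Dirichlet boundary condition at $R=0$, and requires resolvent control on the difference $(\mathcal{L}-z)^{-1}-(-\partial_R^2-z)^{-1}$ together with analytic interpolation to reach the fractional power $s/4$. However, this is precisely the content of the corresponding lemma in \cite{Krg-Sch-Ttr}, established there for the same operator $\mathcal{L}$, so it can be imported into our setting without modification.
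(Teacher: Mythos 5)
Your proposal is correct and takes essentially the same route as the paper, which disposes of this lemma by invoking the corresponding norm-equivalence result of \cite{Krg-Sch-Ttr} for the identical operator $\mathcal{L}$: your chain (distorted Plancherel for $\mathcal{F}$, comparison of $\mathcal{L}$-based and flat fractional Sobolev norms, and the three-dimensional radial reduction $\phi\mapsto R\phi$) is the standard unpacking of that citation, and you rightly isolate the low-frequency behaviour of $\rho$ and the Dirichlet condition as the only delicate point before importing it from \cite{Krg-Sch-Ttr}, exactly as the paper does. One bookkeeping caveat: with the weight $|\xi|^{s/2}$ as written in \eqref{wgh-spc}, functional calculus yields $|\mathcal{L}|^{s/4}$, i.e.\ $s/2$ derivatives, so your final identification with $H^s$ rather than $H^{s/2}$ inherits a factor-of-two looseness already present in the paper's notation (elsewhere the paper pairs $L^{2,\alpha}_{\rho}$ with $H^{2\alpha}$), and your parenthetical that the lower-order $L^2$ error is ``absorbed by the discrete contribution'' is not literally a proof, though neither point affects the argument since the substance is delegated to \cite{Krg-Sch-Ttr}.
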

Solving for the discrete component of $\tilde{\varepsilon}$ requires only elementary techniques, so we focus on the continuous components.

The operator $\mathcal{H}$ will denote the solution map for the equation
\begin{equation}
(\skwdrv^2+\xi)u = f, \hspace{10pt} \lim_{\tau \rightarrow \infty}u(\tau)=0,
\notag 
\end{equation}
which has the following smoothing property (see Corollary 6.3 from \cite{Krg-Sch-Ttr}).
\begin{lemma}
\label{smt-ffc}
There exists a $C_0$ not depending on $N$ such that
\begin{equation}
\| \mathcal{H}b\|_{L^{\infty,N-2}L_{\rho}^{2,1/2+\alpha}}
+
\| (\partial_{\tau}-2\beta(\tau) \xi \partial_{\xi})\mathcal{H}b\|_{L^{\infty,N-1}L_{\rho}^{2,\alpha}}
\leq
C_0\frac{1}{N}\|b\|_{L^{\infty,N}L_{\rho}^{2,\alpha}}.
\notag
\end{equation}
\end{lemma}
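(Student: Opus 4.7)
The plan is to invert the operator $(\skwdrv)^2+\xi$ by the method of characteristics. The characteristics of the transport part $\skwdrv$ are $\xi(\tau;\tau_0,\xi_0)=\xi_0\exp\bigl(-2\int_{\tau_0}^{\tau}\beta(s)\,ds\bigr)$, so setting $U(\tau):=u(\tau,\xi(\tau;\tau_0,\xi_0))$ converts the PDE into a one-parameter family of second order ODEs
\[
\partial_\tau^2 U + \omega^2(\tau;\xi_0)\,U = B(\tau;\xi_0), \qquad \omega^2 := \xi_0\exp\Bigl(-2\int_{\tau_0}^{\tau}\beta(s)\,ds\Bigr),
\]
where $B$ is the pull-back of $b$ along the characteristic. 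Because $\beta(\tau)\sim 1/\tau$, the frequency $\omega$ is slowly varying relative to its own scale, so a WKB ansatz produces two linearly independent homogeneous solutions $U_\pm(\tau;\xi_0)\sim \omega^{-1/2}\exp(\pm i\int^\tau\!\omega)$ at high frequency, with a direct Picard expansion handling the low-frequency regime $\xi\lesssim 1$. Imposing the prescribed final-state condition $U(\tau)\to 0$ gives the Duhamel formula
\[
U(\tau) = -\int_\tau^\infty G(\tau,s;\xi_0)\,B(s;\xi_0)\,ds,
\]
with kernel $G$ pointwise bounded by $|\xi|^{-1/2}|\sin(\sqrt{\xi}(s-\tau))|$ modulo lower order WKB corrections.

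The $|\xi|^{-1/2}$ factor in $G$ is exactly the source of the half-derivative gain in the first term of the estimate: it promotes $L_\rho^{2,\alpha}$-integrability of $b$ to $L_\rho^{2,1/2+\alpha}$-integrability of $\mathcal{H}b$. For the second term, observe that along characteristics the skew derivative $(\skwdrv)\mathcal{H}b$ pulls back to $\partial_\tau U$; differentiating the Duhamel formula costs a factor of $\omega\sim\sqrt{\xi}$, which explains why the spectral weight drops from $1/2+\alpha$ back to $\alpha$. The temporal factor $1/N$ arises directly from integrating the hypothesis $\|b(s)\|_{L_\rho^{2,\alpha}}\lesssim s^{-N}\|b\|_{L^{\infty,N}L_\rho^{2,\alpha}}$ in the Duhamel formula: one finds $\int_\tau^\infty s^{-N}\,ds\lesssim \tau^{-(N-1)}/(N-1)$, which is stronger than the $\tau^{-(N-2)}$ and $\tau^{-(N-1)}$ decay required by the $L^{\infty,N-2}$ and $L^{\infty,N-1}$ weights respectively, and the surplus power of $\tau$ absorbs polynomial distortions of the weights.

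The main technical difficulty is reconciling the characteristic-based calculation with the spectral weights $|\xi|^\alpha$ and the continuous spectral measure $\rho(\xi)\,d\xi$. Along a characteristic the weight $|\xi|^\alpha$ is dilated by $e^{-2\alpha\int\beta}$ while the Jacobian contributes the inverse factor, so the net distortion of the $L_\rho^{2,\alpha}$ norm under the pull-back is polynomial in $\tau/\tau_0$; one must check that this distortion is uniform in $\xi_0$ and is swallowed by the single spare power of $\tau$ encoded in the $N\to N-2$ (respectively $N\to N-1$) loss. A secondary subtlety lies in the transition between the WKB regime and the low-frequency regime $\xi\lesssim(s-\tau)^{-2}$, where the oscillatory bound on $G$ degenerates and must be replaced by the non-oscillatory estimate $|G|\lesssim (s-\tau)$; in that regime the missing smoothing is compensated both by the temporal decay of $b$ and by the behaviour of the spectral density $\rho$ near $\xi=0$, for which Lemma \ref{qvl-nrm} provides the necessary bridge back to Sobolev norms in $R$.
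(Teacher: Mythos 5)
The paper offers no proof of this lemma: it is quoted verbatim as Corollary 6.3 of \cite{Krg-Sch-Ttr}, so the only in-paper ``argument'' is the citation. Your sketch reconstructs, correctly in outline, the mechanism of the proof given there: integrate the transport part $\skwdrv$ along its characteristics $\xi\mapsto\xi\lambda^{2}(\tau)/\lambda^{2}(\sigma)$, reduce to a one-parameter family of second order ODEs, and represent $\mathcal{H}$ as a backward Duhamel integral whose kernel is $O\bigl(\min(\xi^{-1/2},\sigma-\tau)\bigr)$; the $\xi^{-1/2}$ yields the half-power spectral gain, one application of $\skwdrv$ spends it, and integrating $\sigma^{-N}$ over $[\tau,\infty)$ yields the $1/N$. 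Two caveats. First, no genuine WKB approximation is needed: since $dt=d\tau/\lambda$ up to normalization, the phase $\int\omega\,d\tau$ along a characteristic equals $\sqrt{\xi}\,\lambda(\tau)\bigl(t(\sigma)-t(\tau)\bigr)$ exactly, so the kernel can be written down explicitly; what actually has to be proved is that the amplitude errors of this representation and the distortion of the weight $|\xi|^{\alpha}\rho(\xi)\,d\xi$ under the characteristic flow are controlled uniformly in $\xi$ and $\tau$ --- precisely the step you flag as the ``main technical difficulty'' but do not carry out. Second, your claim that $\int_\tau^\infty s^{-N}\,ds\lesssim\tau^{-(N-1)}/(N-1)$ already beats the required decay is inconsistent with your own (correct) closing observation: in the non-oscillatory regime $\xi\lesssim(\sigma-\tau)^{-2}$ the kernel bound degenerates to $\sigma-\tau$, one only gets $\int_\tau^\infty(\sigma-\tau)\sigma^{-N}\,d\sigma\sim\tau^{-(N-2)}/N^{2}$, and this is exactly why the first norm sits in $L^{\infty,N-2}$ rather than $L^{\infty,N-1}$ while the constant stays independent of $N$. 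These are gaps of execution rather than of strategy; filled in, your argument is the proof of Corollary 6.3 in \cite{Krg-Sch-Ttr}.
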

This smoothing effect is sufficient to counter the loss of regularity, quantified in the following lemmas, from the terms on the right hand side of \eqref{bnd-m}.

We would like to use Proposition 6.7 from \cite{Krg-Sch-Ttr} to handle the term $N_{2k-1}$, however our $N_{2k-1}$ differs from that of the flat background case, due to a slight difference in the renormalization sequence $\{u_{2k-1}\}$. We quote the proposition despite this, since our $\{u_{2k-1}\}$ obey enough of the same properties as in the flat case, that the proof for the proposition is still valid.
\begin{lemma}
\label{nnl-lps}
Assume that $N$ is large enough and $\frac{1}{8} \leq \alpha < \frac{\nu}{4}$. Then the map
\begin{equation}
\mathcal{F}{\tilde{\varepsilon}} \rightarrow \lambda^{-2}\mathcal{F}R(N_{2k-1}(R^{-1}\tilde{\varepsilon}))
\notag
\end{equation}
is locally Lipschitz from $L^{\infty,N-2}L_{\rho}^{2,\alpha+1/2}$ to $L^{\infty,N}L_{\rho}^{2,\alpha}$.
\end{lemma}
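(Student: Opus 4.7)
The plan is to reduce the statement to Proposition 6.7 of \cite{Krg-Sch-Ttr} by checking that our renormalized profiles $u_{2k-1}$ enjoy the same structural bounds that drive the argument there. In particular, the proof will not genuinely use anything about the curved background beyond the output of Theorem \ref{renormalization}, since that theorem guarantees the same class membership $u_{2k-1}\in\lambda^{1/2}\bigl(W(R)+(t\lambda)^{-2}IS^2(R(\log R)^{m_k},\QQ)\bigr)$ that the flat case enjoys, merely with additional $b_i$-dependent corrections whose presence only improves the $\tau$-decay.

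First I would apply Lemma \ref{qvl-nrm} to translate the claim from weighted $L^2_\rho$ spaces to the corresponding weighted Sobolev spaces in $R$: it suffices to prove that
\[
\tilde{\varepsilon}\ \longmapsto\ \lambda^{-2}R\,N_{2k-1}(R^{-1}\tilde{\varepsilon})
\]
is locally Lipschitz from $L^{\infty,N-2}H^{\alpha+1/2}$ to $L^{\infty,N}H^{\alpha}$ (with the Sobolev spaces defined on the half-line using the weight attached to $\mathcal{L}$). This conversion makes the problem amenable to fractional Moser-type estimates.

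Second, I would expand $N_{2k-1}(v)=\sum_{j=0}^{4}\binom{5}{j}u_{2k-1}^j v^{5-j}$ and estimate each summand separately. For the mixed terms ($1\leq j\leq 4$) the ingredient is the pointwise bound $|\partial_R^\ell(\lambda^{-1/2}u_{2k-1})|\lesssim\langle R\rangle^{-1-\ell}$ together with analogous bounds on the correction terms, all of which are immediate from the $IS^m(R^k(\log R)^l,\QQ)$ membership proved in Theorem \ref{renormalization}. These bounds are exactly what is used in \cite{Krg-Sch-Ttr}, Proposition 6.7; the $b_1,b_2,b_3$-dependent correction pieces produced by the curvature only add positive powers of $t$, so they weaken no estimate. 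For the pure quintic term, I would use the radial Moser/Sobolev embedding converting $\tilde{\varepsilon}\in H^{\alpha+1/2}$ into suitable $L^p$ control of $R^{-1}\tilde{\varepsilon}$, noting that the hypotheses $\tfrac{1}{8}\leq \alpha<\tfrac{\nu}{4}$ are calibrated precisely so that five-fold Moser bounds close.

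Third, I would collect the temporal weights. For $1\leq j\leq 4$, each of the $5-j$ copies of $\tilde{\varepsilon}$ contributes a factor of $\tau^{-(N-2)}$, while the $j$ factors of $u_{2k-1}$ are $O(1)$ in $\tau$; combined with the $\lambda^{-2}$ prefactor this easily produces the demanded $\tau^{-N}$ decay once $N$ is large. For the quintic term, five copies of $\tilde{\varepsilon}$ yield $\tau^{-5(N-2)}$, which dominates $\tau^{-N}$ for $N\geq 3$. Lipschitz dependence in $\tilde{\varepsilon}$ follows by the usual telescoping and the same bounds.

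The main obstacle is the fractional bookkeeping at the Sobolev level: one has to ensure that the loss of a derivative in the weight ($\alpha+1/2\to\alpha$) is spent only once per estimate and that the radial Moser inequality at exponent $\alpha$ is applicable. This is precisely the delicate content of Proposition 6.7 in \cite{Krg-Sch-Ttr}, and since our $u_{2k-1}$ satisfies the same pointwise and derivative profile as theirs up to lower-order corrections with extra $\tau$-decay, the argument transplants verbatim, justifying quoting the proposition.
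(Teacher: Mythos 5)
Your proposal matches the paper's treatment: the paper likewise gives no independent argument but quotes Proposition 6.7 of \cite{Krg-Sch-Ttr}, justified by the observation that the $u_{2k-1}$ produced by Theorem \ref{renormalization} retain the structural properties used in the flat-case proof. Your additional sketch of the underlying mechanism (reduction via Lemma \ref{qvl-nrm}, term-by-term Moser estimates, temporal weight counting) is consistent with that argument and does not diverge from it.
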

From \eqref{smp-frm}, we expect
that the operator $\mathcal{K}$ is bounded. This is proven in \cite{Krg-Sch-Ttr}: 
\begin{lemma}
\label{k-bnds}
a.) The operator $\mathcal{K}$ maps
\begin{equation}
\mathcal{K}: L_{\rho}^{2, \alpha} \rightarrow L_{\rho}^{2, \alpha}.
\end{equation}
b.) In addition, we have the commutator bound
\begin{equation}
[ \mathcal{K}, \xi \partial_{\xi} ]: L_{\rho}^{2, \alpha} \rightarrow L_{\rho}^{2, \alpha}.
\end{equation}
Both statements hold for all $\alpha \in \mathbb R$.
\end{lemma}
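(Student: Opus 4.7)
Both statements appear in \cite{Krg-Sch-Ttr}, and I would follow the same strategy: realize $\mathcal{K}$ explicitly as an integral operator on the spectral side and verify Schur-type bounds for its kernel.

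The first step is to derive a kernel representation. Using the generalized eigenfunctions $\phi_\xi(R)$ of $\mathcal{L}$, one has $\mathcal{F}^{-1}u(R) = \int \phi_\xi(R) u(\xi)\rho\,d\xi$, so
\[(\mathcal{F} R\partial_R \mathcal{F}^{-1} u)(\xi) = \int_0^\infty\!\!\int_0^\infty \phi_\xi(R)\,R\,\partial_R\phi_\eta(R)\,u(\eta)\rho(\eta)\,d\eta\,dR.\]
An integration by parts in $R$, combined with the eigenvalue equation $\mathcal{L}\phi_\xi = \xi\phi_\xi$, expresses the inner $R$-integral as a distributional kernel consisting of a diagonal piece plus an off-diagonal remainder. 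The diagonal piece is precisely what represents $\xi\partial_\xi$ (once the scaling of $\rho$ is taken into account), so after subtracting $\xi\partial_\xi$ as in the definition of $\mathcal{K}$ one is left with an integral operator
\[(\mathcal{K}u)(\xi) = \int K(\xi,\eta)u(\eta)\rho(\eta)\,d\eta\]
whose kernel $K(\xi,\eta)$ has no diagonal distribution.

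The main step is to prove off-diagonal Schur bounds for $K$: for some $\sigma>0$,
\[|K(\xi,\eta)| \lesssim \frac{1}{|\xi-\eta|}\min\!\left(\frac{\xi}{\eta},\frac{\eta}{\xi}\right)^{\sigma},\]
together with an integrable principal-value singularity along the diagonal. These bounds follow from the WKB asymptotics $\phi_\xi(R)\sim \xi^{-1/4}\sin(R\sqrt{\xi}+\theta(\xi))$ at large $R$, uniformly in $\xi$, combined with a separate treatment of the bound-state region at small $R$; the resulting oscillatory integrals are handled by stationary and nonstationary phase estimates. Once the kernel bounds are in hand, Schur's test gives boundedness on $L^2_\rho$, and the weight $|\xi|^{\alpha/2}$ appearing in $L^{2,\alpha}_\rho$ is absorbed by observing that the conjugated kernel $(\xi/\eta)^{\alpha/2}K(\xi,\eta)$ satisfies the same Schur bounds for every $\alpha\in\mathbb{R}$. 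This yields part (a).

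For part (b), the commutator $[\mathcal{K},\xi\partial_\xi]$ has kernel essentially $(\xi\partial_\xi+\eta\partial_\eta)K(\xi,\eta)$, plus a lower-order correction coming from the spectral measure $\rho$. Because $K$ is approximately homogeneous of degree $-1$ under the joint dilation $(\xi,\eta)\mapsto(s\xi,s\eta)$, the operator $\xi\partial_\xi+\eta\partial_\eta$ returns a kernel obeying the same Schur bounds, and part (b) follows by the same argument. The main obstacle throughout is establishing the off-diagonal decay of $K$ uniformly near the turning points of $\phi_\xi$; this rests on a careful uniform-in-$\xi$ WKB analysis of the generalized eigenfunctions, together with a separate (but easier) contribution coming from the discrete eigenvalue $\xi_{-}$.
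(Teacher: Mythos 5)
The paper itself offers no proof of this lemma: it is quoted directly from Krieger--Schlag--Tataru \cite{Krg-Sch-Ttr} (the ``transference operator'' bounds, Theorem 5.1 there and its corollaries), and the operator $\mathcal{K}$ here is literally the same operator as in the flat case, since $\mathcal{L}=-\partial_R^2-5W^4(R)$ does not see the metric $g$. Your overall strategy is the correct reconstruction of that source's proof: represent $\mathcal{F}R\partial_R\mathcal{F}^{-1}$ as a kernel operator, integrate by parts against the eigenvalue equation so that the off-diagonal kernel takes the form $\frac{\rho(\eta)}{\xi-\eta}\bigl\langle[\mathcal{L},R\partial_R]\phi_\eta,\phi_\xi\bigr\rangle$ with $[\mathcal{L},R\partial_R]=2\mathcal{L}+2V+RV'$, $V=-5W^4$ (so the surviving potential term $2V+RV'$ decays rapidly in $R$ and produces the off-diagonal decay), identify the diagonal part with $\xi\partial_\xi$ plus a bounded multiplication operator, and for part (b) exploit the degree $-1$ homogeneity of the kernel under the joint dilation $(\xi,\eta)\mapsto(s\xi,s\eta)$.

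Two steps as written would not close. First, Schur's test does not apply to a kernel bounded only by $|\xi-\eta|^{-1}\min(\xi/\eta,\eta/\xi)^{\sigma}$: the $\eta$-integral diverges at the diagonal, and your phrase ``integrable principal-value singularity'' is self-contradictory --- a principal-value singularity is precisely a non-integrable one. In \cite{Krg-Sch-Ttr} the near-diagonal piece is split off and bounded as a (weighted) Hilbert transform, with Schur's test reserved for the region where $\xi/\eta$ is bounded away from $1$; you need that splitting explicitly. Second, a single fixed $\sigma>0$ cannot yield boundedness on $L^{2,\alpha}_\rho$ for \emph{all} $\alpha\in\RR$: conjugating by the weight multiplies the kernel by $(\xi/\eta)^{c\alpha}$, which defeats the bound as soon as $|\alpha|$ exceeds a multiple of $\sigma$. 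The all-$\alpha$ statement rests on the sharper, anisotropic kernel bounds of \cite{Krg-Sch-Ttr} (with separate behaviour for $\xi,\eta\lesssim1$ and $\xi,\eta\gtrsim1$), which you would have to reprove or quote. With those two repairs your argument coincides with the one the paper is implicitly invoking.
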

The $K$ term is specific to the curved background in our problem, which creates an upper bound on the blow up rates.
\begin{lemma}
Let $\nu \in (0,1]$. Then the map
\begin{equation}
\mathcal{F}\tilde{\varepsilon} \rightarrow
\lambda^{-2}\mathcal{F}RK(R^{-1}\tilde{\varepsilon})
\notag
\end{equation}
is locally Lipschitz from $L^{\infty,N-2}L_{\rho}^{2,\alpha+1/2}$ to $L^{\infty,N}L_{\rho}^{2,\alpha}$.
\label{crv-lps} 
\end{lemma}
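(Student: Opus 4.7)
Since $K$ is linear in its argument, local Lipschitz reduces to a uniform operator bound. My strategy is to translate both sides to Sobolev norms via Lemma~\ref{qvl-nrm}, exploit the curvature hypothesis \eqref{mtr-cnd} to control $\kappa$ as a harmless bounded multiplier, reduce the resulting first-order expression to operators already bounded on the spectral side, and extract the required time decay from the prefactor $\lambda^{-2}$.

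First, I will apply Lemma~\ref{qvl-nrm} to obtain the equivalences
\[ \|\lambda^{-2}\mathcal{F} R K(R^{-1}\tilde\varepsilon)\|_{L_\rho^{2,\alpha}} \sim \lambda^{-2}\|K(R^{-1}\tilde\varepsilon)\|_{H^{2\alpha}}, \qquad \|\mathcal{F}\tilde\varepsilon\|_{L_\rho^{2,\alpha+1/2}} \sim \|R^{-1}\tilde\varepsilon\|_{H^{2\alpha+1}}, \]
the outer factor $R$ cancelling the $R^{-1}$ built into the Sobolev side of the equivalence. Writing $v = R^{-1}\tilde\varepsilon$ and using $r\partial_r = R\partial_R$, one has $K(v) = \kappa(r)R\partial_R v$, where \eqref{mtr-cnd}, sharpened by Remark~\ref{scl-brk}, shows $\kappa(r)$ is smooth with $|\kappa(r)| \lesssim 1$. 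Viewed as a function of $R$, $\kappa(R/\lambda)$ has $k$-th $R$-derivative of size $\lambda^{-k}$, hence it acts as a bounded multiplier on $H^{2\alpha}$ with norm uniform in $\lambda \geq 1$.

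It then remains to bound $\|R\partial_R v\|_{H^{2\alpha}}$ by $\|v\|_{H^{2\alpha+1}}$. The coefficient $R$ is unbounded, but by the spectral identity $\mathcal{F} R \partial_R \mathcal{F}^{-1} = \mathcal{K} + \xi\partial_\xi$ from \eqref{k-dfn}, combined with Lemma~\ref{k-bnds}(a) and the elementary boundedness of $\xi\partial_\xi$ on $L_\rho^{2,\alpha+1/2}$ via integration by parts, $R\partial_R$ translates to a bounded operator on the relevant weighted spectral space. Transporting back through Lemma~\ref{qvl-nrm} yields $\|K(v)\|_{H^{2\alpha}} \lesssim \|v\|_{H^{2\alpha+1}} \sim \|\mathcal{F}\tilde\varepsilon\|_{L_\rho^{2,\alpha+1/2}}$.

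For the time decay, $d\tau/dt = \lambda$ gives $\tau \sim \nu^{-1}t^{-\nu}$, hence $\lambda^{-2} = t^{2+2\nu} \sim \tau^{-(2+2/\nu)}$; for $\nu \in (0,1]$ we have $2+2/\nu \geq 4$, so $\lambda^{-2} \lesssim \tau^{-2}$ and taking $\sup_\tau \tau^{N-2}(\,\cdot\,)$ on the source translates into $\sup_\tau \tau^N(\,\cdot\,)$ on the target. The principal technical difficulty is the bound on the first-order operator $R\partial_R$ with unbounded coefficient, which I circumvent through the spectral representation \eqref{k-dfn} and Lemma~\ref{k-bnds}. The restriction $\nu \leq 1$ enters precisely to ensure $\lambda^{-2}$ supplies the $\tau^{-2}$ decay needed to convert $L^{\infty,N-2}$ on the source into $L^{\infty,N}$ on the target, consistent with the earlier observation that this is the term responsible for the upper bound on the admissible blow-up rates.
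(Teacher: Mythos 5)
Your reduction to weighted Sobolev norms via Lemma~\ref{qvl-nrm} and your treatment of $\kappa$ as an essentially harmless multiplier do match the paper's opening moves, but the core step of your argument is wrong. You claim $\|R\partial_R v\|_{H^{2\alpha}} \lesssim \|v\|_{H^{2\alpha+1}}$ by writing $\mathcal{F}R\partial_R\mathcal{F}^{-1} = \mathcal{K}+\xi\partial_\xi$ and asserting that $\xi\partial_\xi$ is bounded on $L_\rho^{2,\alpha+1/2}$ ``by integration by parts.'' It is not: $\xi\partial_\xi$ is (up to a constant) the generator of dilations in the spectral variable, an unbounded first-order operator, and this is precisely why the paper isolates the \emph{bounded remainder} $\mathcal{K}$ in \eqref{k-dfn} and keeps the transport operator $(\partial_\tau-2\beta(\tau)\xi\partial_\xi)$ intact on the left-hand side of \eqref{gg-m} rather than absorbing $\xi\partial_\xi$ into the errors. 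Equivalently, on the physical side $R\partial_R$ cannot map $H^{2\alpha+1}$ to $H^{2\alpha}$ with a uniform constant, because the coefficient $R$ is unbounded.

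The mechanism the paper actually uses, and which you miss, is that $\kappa$ carries the cutoff $\varphi$, so after passing to Cartesian coordinates the coefficient $X_i$ in $\sum_i \varphi(R/(2\lambda))X_i\partial_{X_i}$ is only of size $\lambda$ on the relevant support; one then pays exactly one power of $\lambda$ (split into a bounded commutator $[(1-\Delta)^\alpha,\varphi(R/(2\lambda))X_i]$ plus a direct term of size $\lambda$), together with a Hausdorff--Young bound $\|(1-\Delta)^\alpha\kappa(R/\lambda)\|_\infty\lesssim 1+\lambda^{-2\alpha}$ for the fractional Leibniz remainder. The net prefactor is therefore $\lambda^{-2}\cdot\lambda=\lambda^{-1}=\tau^{-1-1/\nu}$, and the requirement $1+1/\nu\geq 2$, i.e.\ $\nu\leq 1$, is what produces the $\tau^{-2}$ gain needed to pass from $L^{\infty,N-2}$ to $L^{\infty,N}$. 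Your own accounting gives $\lambda^{-2}\sim\tau^{-2-2/\nu}$, and since $2+2/\nu>2$ for \emph{every} $\nu>0$, your version of the argument would impose no restriction on $\nu$ at all --- contradicting both the hypothesis of the lemma and your closing sentence. That internal inconsistency is the symptom of the missing loss of one power of $\lambda$ in the $R\partial_R$ bound.
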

\begin{proof}
This term becomes problematic, the intuition being as follows. The arguments to produce these blow up solutions heavily rely on the scale invariance of the energy of the equation.
 The $\kappa$ term breaks the scale since it is a  curvature term (see Remark \ref{scl-brk}).
By Lemma \ref{qvl-nrm}, it suffices to prove the result for the weighted Sobolev spaces, and we further change to cartesian coordinates which facilitates the use of these spaces.
We also recall the estimate from \cite{Tlr-bk} that
\begin{equation}
\|(1-\Delta)^{s/2}(fh)-f(1-\Delta)^{s/2}(h)-h(1-\Delta)^{s/2}(f)\|_{2} \leq
\|f\|_{\infty}\|(1-\Delta)^{s/2} h\|_{2}.
\label{prd-rl}
\end{equation}
Let $R = \sqrt{\sum_{i=1}^{3}X_i^2}$. Using \eqref{prd-rl}, and the support of $\kappa$, 
\begin{align}
\|\lambda^{-3}R& \kappa(\frac{R}{\lambda})\lambda\partial_{R}(R^{-1}\tilde{\varepsilon})\|_{L_{\tau}^{N-2}H^{2\alpha}}\\
& =
\|\lambda^{-2}\kappa(\frac{R}{\lambda}) \sum_{i=1}^{3}\varphi(R/(2\lambda)) X_i\partial_{X_i}(R^{-1}\tilde{\varepsilon})\|_{L_{\tau}^{N-2}H^{2\alpha}}
\notag \\
& \lesssim
\sum_{i=1}^{3} \| \lambda^{-2}\| \kappa(\frac{R}{\lambda})  \|_{\infty} \|\varphi(R/(2\lambda))X_i \partial_{X_i}(R^{-1}\tilde{\varepsilon})\|_{H^{2\alpha}} \|_{L_{\tau}^{N-2}}
\notag \\
& \quad +
\sum_{i=1}^{3} \| \lambda^{-2}\| (1-\Delta)^{\alpha} \kappa(\frac{R}{\lambda})  \|_{\infty} \| \varphi(R/(2\lambda))X_i\partial_{X_i}(R^{-1}\tilde{\varepsilon})\|_{L^{2}} \|_{L_{\tau}^{N-2}}.
\label{tw-pcs}
\end{align}
By the Hausdorf inequality,
\begin{align}
\| (1-\Delta)^{\alpha} \kappa(\frac{R}{\lambda})  \|_{\infty}
& =
\|  (1-\Delta)^{\alpha} \kappa(\frac{R}{\lambda})  \|_{\infty}
\notag \\
& \lesssim
 \|(1+|\zeta|^{2\alpha}) \lambda^3 \hat{\kappa}(\lambda \zeta) \|_{1}
\notag \\
& \lesssim
(1+\lambda^{-2\alpha}).
\label{hsd-bnd}
\end{align}
For the other terms, we use the support of $\varphi$:
\begin{align}
\|\varphi(R/(2\lambda))X_i \partial_{X_i}(R^{-1}\tilde{\varepsilon})\|_{H^{2\alpha}}
& =\|(1-\Delta)^{\alpha}\varphi(R/(2\lambda))X_i \partial_{X_i}(R^{-1}\tilde{\varepsilon})\|_{L^2}
\notag \\
& =\|[ (1-\Delta)^{\alpha},\varphi(R/(2\lambda))X_i ] \partial_{X_i}(R^{-1}\tilde{\varepsilon})\|_{L^2}
\notag \\
& \quad +
\|\varphi(R/(2\lambda))X_i (1-\Delta)^{\alpha} \partial_{X_i}(R^{-1}\tilde{\varepsilon})\|_{L^2}
\notag \\
& \lesssim \| \partial_{X_i}(R^{-1}\tilde{\varepsilon})\|_{L^2} 
+
\lambda\| (1-\Delta)^{\alpha} \partial_{X_i}(R^{-1}\tilde{\varepsilon})\|_{L^2}
\notag \\
& \lesssim \| R^{-1}\tilde{\varepsilon}\|_{H^{1+2\alpha}}
+
\lambda\| R^{-1}\tilde{\varepsilon}\|_{H^{1+2\alpha}}.
\label{cmm-wgh-trm}
\end{align}
With \eqref{tw-pcs}, \eqref{hsd-bnd}, and \eqref{cmm-wgh-trm}, and writing $t$ in terms of $\tau$
we find that
\begin{align}
\|\lambda^{-3}R\kappa(\frac{R}{\lambda})\lambda\partial_{R}(R^{-1}\tilde{\varepsilon})\|_{L_{\tau}^{N-2}H^{2\alpha}}
& \lesssim
\| \tau^{-1-1/\nu}\| R^{-1}\tilde{\varepsilon}\|_{H^{1+2\alpha}} \|_{L_{\tau}^{N-2}}.
\notag
\end{align}
The result follows since $\nu \in (0,1]$. 
\end{proof}

\begin{remark}
\label{lcl-glb}
 For the renormalization step, all of the computations were local. The perturbation step is a global computation, because it involves the nonlocal operator $\mathcal{F}$. We must take steps to reconcile these two complementary tools in the proof of Theorem \ref{main} below.
\end{remark}
We can now use Theorem \ref{renormalization}
followed by
Lemma \ref{crv-lps} to produce the required solution to \eqref{slv-m}. The steps are nearly identical to the conclusion section of \cite{Krg-Sch-Ttr}, with some small modifications due to the curved background.

\begin{proof}[Proof of Theorem \ref{main}]
From Theorem \ref{renormalization} we obtain an approximate solution $u_{2k+1}$ and an error term $e_{2k+1}$, but only within the light cone. As per Remark \ref{lcl-glb}, we must extend $u_{2k+1}$, and $e_{2k+1}$ before solving \eqref{prt-qtn}. We extend them to compactly supported functions $\tilde{u}_{2k+1}$, $\tilde{e}_{2k+1}$ of the same regularity in $r \leq 2t$.
 It follows from lemmas \ref{k-bnds}, \ref{nnl-lps}, \ref{crv-lps}, \ref{smt-ffc}, taking $N$ large enough, then $k$ large enough,
 and the contraction mapping theorem that there exists a solution $\varepsilon$ to \eqref{prt-qtn}. The function $u(t,r) = \tilde{u}_{2k+1}(t,r)+\varepsilon(t,r)$ is a solution to \eqref{slv-m-frs} within the light cone. Since
\begin{equation}
\lim_{t \rightarrow 0}\int_{K_t}(|\nabla (u - u_0)| + |u-u_0|^6) \mtr \,dr  =0,
\notag
\end{equation}
and
\begin{equation}
\lim_{t \rightarrow 0}\int_{K_t^c}(|\nabla u | + |u|^6) \mtr \,dr  =0,
\notag
\end{equation}
it is possible to choose a $t_0$ so that the set with $r \leq 3t_0$ is contained in $U^{\prime}$, and
\begin{equation}
\int_{r > t_0}(|\nabla (\tilde{u}_{2k+1}(r,t_0)+\varphi(r)\varepsilon(r,t_0)) | + |\tilde{u}_{2k+1}(r,t_0)+\varphi(r)\varepsilon(r,t_0)|^6) \mtr \,dr \lesssim \delta.
\notag
\end{equation}
We now solve the initial value problem for \eqref{slv-m} with initial data $(\tilde{u}_{2k+1}(r,t_0)+\varphi(r)\varepsilon(r,t_0), \partial_t\tilde{u}_{2k+1}(r,t_0)+\varphi(r)\partial_t\varepsilon(r,t_0))$.
 Call this solution $u$. By the transport properties of the wave equation, $u$ is equal to $u_{2k+1}+\varepsilon$ within the light  cone, so it blows up at the origin. Because of global well posedness for small energies, showing the energy outside the lightcone remains small will imply the solution must blow up only at the origin. Observe that
\begin{equation}
E(u) \approx E(W) + O(\delta).
\notag
\end{equation}
Therefore the energy outside the lightcone obeys
\begin{equation}
\int_{K_t^C} ( \frac{1}{2}|\partial_t u|^2 + \frac{1}{2}|\partial_r u|^2 - \frac{1}{6}|u|^6 ) \mtr \,dr \lesssim \delta.
\notag
\end{equation}
Since $\textrm{supp } {u} \subset U^{\prime}$, we also have the Sobolev estimate
\begin{equation}
\int_{r \ge t} |u|^6 \mtr \,dr \lesssim \left ( \int_{r \ge t} |\partial_r u|^2 \mtr \,dr  \right )^3.
\notag
\end{equation}
Therefore either
\begin{equation}
\int_{K_t^C} (|\partial_t u|^2 + |\partial_r u|^2 ) \mtr \,dr  \lesssim \delta,
\notag
\end{equation}
or
\begin{equation}
\int_{K_t^C} (|\partial_t u|^2 + |\partial_r u|^2  )\mtr \,dr  \gtrsim 1.
\notag
\end{equation}
The first alternative holds for the initial data. By continuity, it holds for all times.

\end{proof}
\noindent
\textbf{Acknowledgments:} We would like to thank Joachim Krieger for suggesting the problem and Roland Donninger for helpful discussions.

\end{document}